\title[Cartan geometries on complex surfaces]%
{Holomorphic Cartan geometries on uniruled surfaces}
\date{15 April 2011}
\author{Benjamin McKay}
\address{School of Mathematical Sciences, 
University College Cork, Cork, Ireland}
\email{b.mckay@ucc.ie}
\thanks{It is a pleasure to thank
Sorin Dumitrescu for helpful conversations
on the problems solved in this paper,
and for inviting me to 
the
Laboratoire de Math{\'e}matiques J.A. Dieudonn{\'e} at the University of Nice Sophia--Antipolis
where this paper was written.}
\keywords{Cartan connection, complex surface}
\date{\today}
\newtheorem{theorem}{Theorem}
\newtheorem{lemma}{Lemma}
\newtheorem{proposition}{Proposition}
\theoremstyle{remark}
\newtheorem{definition}{Definition}
\newtheorem{example}{Example}
\newtheorem{remark}{Remark}
\newcommand{\C}[1]{\ensuremath{\mathbb{C}^{#1}}}
\newcommand{\Z}[1]{\ensuremath{\mathbb{Z}^{#1}}}
\newcommand{\OO}[1]{%
  \ensuremath{%
    \mathcal{O}%
    \ifthenelse{\equal{#1}{0}}%
      {}%
      {\left({#1}\right)}%
  }%
}%
\newcommand{\OOp}[2]{
  \ensuremath{
    \mathcal{O}
    \ifthenelse{\equal{#1}{0}}
      {}
      {\left({#1}\right)}
    \ifthenelse{\equal{#2}{1}}
      {}
      {^{\oplus{#2}}}
  }
}
\newcommand{\Proj}[1]{\ensuremath{\mathbb{P}^{#1}}}
\newcommand{\Sym}[2]{\ensuremath{\operatorname{Sym}^{#1}\left(#2\right)}}
\newcommand{\GL}[1]{\ensuremath{\operatorname{GL}\left(#1\right)}}
\newcommand{\SL}[1]{\ensuremath{\operatorname{SL}\left(#1\right)}}
\newcommand{\PSL}[1]{\ensuremath{\mathbb{P}\operatorname{SL}\left(#1\right)}}
\newcommand{\nForms}[2]{\ensuremath{\Omega^{#1} \left ( {#2} \right)}}
\newcommand{\Lm}[2]{\ensuremath{\Lambda^{#1}\left({#2}\right)}}
\newcommand{\Cohom}[2]{\ensuremath{H^{#1}\left({#2}\right)}}
\DeclareMathOperator{\Ad}{Ad}
\newcommand{\hook}{\ensuremath{\mathbin{ \hbox{\vrule height1.4pt
        width4pt depth-1pt \vrule height4pt width0.4pt depth-1pt}}}}
\newcommand{\SO}[1]{\ensuremath{\operatorname{SO}\left(#1\right)}}
\newcommand{\pd}[2]{\frac{\partial #1}{\partial #2}}
\newcommand{\map}[3][:]%
{\ensuremath{\ifthenelse{\equal{#1}{:}}{}{{#1} \colon}{#2} \to {#3}}}
\newcommand{\mapto}[3][:]%
{\ensuremath{\ifthenelse{\equal{#1}{:}}{}{{#1} \colon}{#2} \mapsto {#3}}}
\newcommand{\Hom}[2]{\ensuremath{\operatorname{Hom}\left({#1},{#2}\right)}}
\newcommand{\homotopygp}[2]%
{\ensuremath{\pi_{#1}\left({#2}\right)}}
\newcommand{\fundgp}[1]%
{\ensuremath{\homotopygp{1}{#1}}}
\newcommand{\Lie}[1]{\ensuremath{\mathfrak{#1}}}
\newcommand{\LieSL}[1]%
{\ensuremath{\Lie{sl}\left({#1}\right)}}
\newcommand{\LieGL}[1]%
{\ensuremath{\Lie{gl}\left({#1}\right)}}
\newcommand{\LieG}{\Lie{g}}
\newcommand{\LieH}{\Lie{h}}
\newcommand{\LieK}{\Lie{k}}
\newcommand{\Bihol}[1]%
{\ensuremath{\operatorname{Bihol}\left(#1\right)}}
\newcommand{\Aut}[1]%
{\ensuremath{\operatorname{Aut}\left(#1\right)}}
\newcommand{\AutP}[1]%
{\ensuremath{\operatorname{Aut}_{\pi}\left(#1\right)}}
\newcommand{\ExtAut}[1]%
{\ensuremath{\operatorname{Aut}'\left(#1\right)}}
\newcounter{remarkCounter}
\newcolumntype{A}{>{$}c<{$}}
\newcolumntype{Q}{>{$}l<{$}}
\newcolumntype{V}{>{$}r<{$}}
\newcommand{\betweenEntries}{%
\\* \addlinespace[10pt]%\midrule\addlinespace[5pt]
}%
\newcommand{\otoprule}%
{\midrule[\heavyrulewidth]\addlinespace[5pt]}
\begin{document}

\begin{abstract} 
\begin{unabridged}  
We classify holomorphic Cartan geometries
on every compact complex curve, and on
every compact complex surface which
contains a rational curve.
\end{unabridged}  
\begin{abridged}  
We classify holomorphic Cartan geometries
on every compact complex surface which
contains a rational curve.
\end{abridged}  
\end{abstract}

\maketitle
\tableofcontents

\section{Introduction}

In my work with Indranil Biswas
\cite{Biswas/McKay:2010}
it became clear that the classification of
holomorphic Cartan geometries on uniruled 
compact complex surfaces was in sight. 
In this paper, I find that classification. 
The classification is essential 
for a paper in progress in which Sorin 
Dumitrescu and I will survey the
holomorphic locally homogeneous structures on
all compact complex surfaces.
The main result in the paper you have
before you is that
if a compact complex surface $S$ bears a
holomorphic
Cartan geometry, and contains a rational curve,
then the Cartan geometry is flat, and
either $S$ is a rational homogeneous
variety with its standard flat holomorphic
Cartan geometry, or $S$ is a flat
$\Proj{1}$-bundle $S \to C$
and the holomorphic Cartan geometry is 
induced by a locally homogeneous geometric 
structure on $C$. 
We will see explicit examples of 
complex manifolds on which the
moduli stack of holomorphic Cartan
geometries with a fixed model is not
a complex analytic space (or orbispace). We also
see in explicit examples 
that deformations of holomorphic
Cartan geometries can give rise
to nontrivial deformations of the total
space of the Cartan geometry as
a holomorphic principal bundle.

{}%
\begin{unabridged}%
To complete the picture,
I find all locally homogeneous
geometric structures on compact complex curves.
I also prove several elementary results for
which I don't know a reference.
\end{unabridged}%
\begin{abridged}%
A more detailed version of this paper appears
on the arXiv.
\end{abridged}
\section{Definitions}

\begin{abridged}
We will refer to $(G,X)$-structures \cite{Goldman:2010} 
where $X=G/H$ is homogeneous as $G/H$-structures.
\end{abridged}

\begin{unabridged}

\subsection{%
\texorpdfstring%
{Definition of $G/H$-structures}
{Definition of G/H-structures}
}%

Suppose that $G$ is a Lie group and that $H \subset G$
is a closed subgroup.
\begin{definition}
A \emph{$G/H$-chart} \cite{Goldman:2010} on a manifold $M$
is a local diffeomorphism from an open
subset of $M$ to an open subset of $G/H$.
\end{definition}

\begin{definition}
Two $G/H$-charts $f_0$ and $f_1$ 
on a manifold are \emph{compatible}
if there is some element $g \in G$
so that $f_1 = g f_0$ where both $f_0$
and $f_1$ are defined.
\end{definition}

\begin{definition}
A \emph{$G/H$-atlas} on a manifold $M$
is a collection of mutually compatible
$G/H$-charts.
\end{definition}

\begin{definition}
A $G/H$-structure on a manifold $M$ 
is a maximal $G/H$-atlas.
\end{definition}

\begin{definition}
A $G/H$-structure is \emph{effective}
if $G$ acts faithfully on $G/H$.
\end{definition}

\subsection{Pulling back}

\begin{definition}
If $\map[F]{M_0}{M_1}$ is a local diffeomorphism,
and $H \subset G$ a closed subgroup of a Lie group,
then every $G/H$-structure on $M_1$ has a \emph{pullback
structure} on $M_0$, whose charts are precisely
the compositions $F \circ f$, for $f$ a chart
of the $G/H$-structure. Conversely, if
$F$ is a regular covering map, and $M_0$
has a $G/H$-structure which is invariant
under the deck transformations, then
it induces a $G/H$-structure on $M_1$.
\end{definition}

\subsection{Developing maps and holonomy morphisms}

\begin{definition}
Suppose that $\left(M,m_0\right)$ 
is a pointed manifold,
with universal covering space 
$\left(\tilde{M},\tilde{m}_0\right)$.
Suppose that $H \subset G$ is a closed
subgroup of a Lie group.
Let $X=G/H$ and $x_0=1 \cdot H \in X$.
A $G/H$-\emph{developing system}
is a pair $\left(\delta,h\right)$
of maps, where
\[
\map[\delta]%
{\left(\tilde{M},\tilde{m}_0\right)}%
{\left(X,x_0\right)}
\]
is a local diffeomorphism
and 
\[
\map[h]{\fundgp{M}}{G}
\]
is a group homomorphism
so that
\[
\delta\left(\gamma \tilde{m}\right)
=
h\left(\gamma\right)\delta\left(\tilde{m}\right),
\]
for every $\gamma \in \fundgp{M}$
and $\tilde{m} \in \tilde{M}$.
The map $\delta$ is called the
\emph{developing map}, and the
morphism $h$ is called the 
\emph{holonomy morphism}
of the developing system.
\end{definition}

\begin{definition}
Denote the universal covering map of a
pointed manifold $\left(M,m_0\right)$
as 
\[
\map[\pi_M]{\left(\tilde{M},\tilde{m}_0\right)}%
{\left(M,m_0\right)}.
\] 
Given a $G/H$-developing system $\left(\delta,h\right)$
on a manifold $M$, the \emph{induced $G/H$-structure} 
on $M$ is the one whose charts are all maps $f$
so that $\delta = f \circ \pi_M$.
\end{definition}

\begin{remark}\label{remark:Goldman}
Conversely, it is well known \cite{Goldman:2010} that
if $G$ acts faithfully on $G/H$ then every
$G/H$-structure is induced by a developing system
$\left(\delta,h\right)$, which is uniquely
determined (after choice of a point
$m_0 \in M$ about which to develop) up to conjugacy:
\[
\mapto{\left(\delta,h\right)}%
{\left(g\delta,\Ad(g)h\right)}
\]
\end{remark}

\subsection{Inducing structures from other structures}

\begin{definition}
Suppose that 
$\map[\varphi]{G_0}{G_1}$
is a morphism of Lie groups,
that $H_0 \subset G_0$
and $H_1 \subset G_1$
are closed subgroups,
and that 
$\varphi\left(H_0\right) \subset H_1$.
Define a smooth map
\[
\map[\Phi]{G_0/H_0}{G_1/H_1}
\]
by
\[
\Phi\left(g_0H_0\right)
=
\varphi\left(g_0\right)H_1.
\]
Suppose also that 
\[
\map[\varphi'(1)]{\LieG_0/\LieH_0}{\LieG_1/\LieH_1}
\]
is a linear isomorphism.
Then clearly $\Phi$ is
a local diffeomorphism. 
If $M$ is equipped with a $G_0/H_0$-chart $f$,
then $\Phi \circ f$ is clearly a $G_1/H_1$-chart.
A $G_0/H_0$-structure $\left\{f_{\alpha}\right\}$
has \emph{induced} $G_1/H_1$-structure
$\left\{\Phi \circ f_{\alpha}\right\}$.
Every developing system $\left(\delta,h\right)$
on $M$ has induced developing system
$\left(\Phi \circ \delta,\varphi \circ h\right)$.
\end{definition}

\end{unabridged}

\subsection{Quotienting by the kernel}

\begin{definition}[Sharpe \cite{Sharpe:1997}]
If $H \subset G$ is a closed subgroup
of a Lie group, the \emph{kernel}
of the pair $\left(G,H\right)$
is
\[
K = \bigcap_{g \in G} gHg^{-1},
\]
i.e. the largest subgroup of $H$
which is normal in $G$.
The kernel is precisely the set
of elements of $G$ which act trivially
on $G/H$.
\end{definition}

\begin{example}
If $(G,H)$ has kernel $K$, we can let
$\bar{G}=G/K$, $\bar{H}=H/K$,
make the obvious morphism 
$\mapto{g \in G}{\bar{g}=gK \in \bar{G}}$,
and then clearly $G/H=\bar{G}/\bar{H}$.
Every $G/H$-structure then has induced 
$\bar{G}/\bar{H}$-structure, with the
same charts, called the \emph{induced effective 
structure}. Any developing map $\delta$
and holonomy morphism $h$
for the $G/H$-structure
gives the obvious developing map
$\bar\delta=\delta$ and holonomy morphism $\bar{h}$:
the composition of $h$ with $\map{G}{\bar{G}}$.
\end{example}

\begin{unabridged}

\begin{example}
Continuing with the same example,
if there is a morphism $\map[h]{\fundgp{M}}{G}$
so that $\bar{h}$ is the composition
\[
\xymatrix{
& G \ar[dd] \\
\fundgp{M} \ar[ur]^{h} \ar[dr]^{\bar{h}} \\
& \bar{G}
}
\]
then clearly we can let $\delta=\bar{\delta}$
and $\left(\delta,h\right)$ is a developing
system for $\Gamma$ as a $G/H$-structure.
Such a morphism $h$ 
need not exist and (as we will see below) 
it also need not be unique.
\end{example}

\end{unabridged}

\begin{abridged}
A Cartan geometry modelled on a homogeneous
space $G/H$ will also be called a $G/H$-geometry.
\end{abridged}

\begin{unabridged}

\subsection{Definition of Cartan geometries}
\begin{definition}
If \(E \to M\) is a principal right \(G\)-bundle,
we will write the right \(G\)-action as \(r_g e = eg\), where \(e\in E\)
and \(g\in G\).
\end{definition}

Throughout we use the convention that principal bundles
are right principal bundles.

\begin{definition}\label{def:CartanConnection}
Let \(H \subset G\) be a closed subgroup of a Lie group,
with Lie algebras \(\LieH \subset \LieG\). A \(G/H\)-geometry, or
\emph{Cartan geometry}
modelled on \(G/H\), on a manifold \(M\) is a choice of
$C^\infty$ principal $H$-bundle $E \to M$, and smooth
1-form $\omega \in
\nForms{1}{E} \otimes \LieG$ called the 
\emph{Cartan connection},
which satisifies all of the following conditions:
\begin{enumerate}
\item
\(
r_h^* \omega = \Ad_h^{-1} \omega
\) for all \(h \in H\).
\item
$\map[\omega_e]{T_e E}{\LieG}$ is a linear isomorphism at each point
$e \in E$.
\item
For each $A \in \LieG$, define a vector field $\vec{A}$ on $E$ by
the equation $\vec{A} \hook \omega = A$. Then the vector fields
$\vec{A}$ for $A \in \LieH$ generate the $H$-action on $E$.
\end{enumerate}
\end{definition}
Sharpe \cite{Sharpe:2002} gives an introduction to Cartan geometries.
\begin{example}
The principal $H$-bundle $G \to G/H$ is a Cartan geometry, with
Cartan connection $\omega=g^{-1} \, dg$ the left invariant
Maurer--Cartan 1-form on $G$; this geometry is called the
\emph{model Cartan geometry}.
\end{example}
\begin{definition}
An \emph{isomorphism} of $G/H$-geometries $E_0 \to M_0$ and
$E_1 \to M_1$ with Cartan connections $\omega_0$ and $\omega_1$
is an $H$-equivariant diffeomorphism 
$\map[F]{E_0}{E_1}$ 
so that $F^* \omega_1 = \omega_0$.
\end{definition}

\begin{definition}
A $G/H$-geometry is \emph{effective}
if $G$ acts faithfully on $G/H$.

On the other hand, suppose that
$H \subset G$ is a closed
subgroup and $K \subset H$
is the kernel of $G/H$. Let $\bar{G}=G/K$
and $\bar{H}=H/K$.
If $E \to M$
is a $G/H$-geometry, let $\bar{E}=E/K$.
Denote by $\LieK$ the Lie algebra of $K$.
There is a unique 1-form $\bar{\omega}$
on $\bar{E}$ which pulls back to 
$\omega+\LieK$; $\bar{E} \to M$ is 
the \emph{induced effective} 
$\bar{G}/\bar{H}$-geometry with
Cartan connection $\bar\omega$.
\end{definition}

\subsection{Curvature}

\begin{definition}
The \emph{curvature bundle} of a Cartan geometry $\map{E}{M}$
is the vector bundle 
\[
W =
E \times_H 
\left(
\LieG \otimes
\Lm{2}{\LieG/\LieH}^*
\right).
\]
\end{definition}

\begin{definition}
The \emph{curvature} of a Cartan geometry $\map{E}{M}$
with Cartan connection $\omega$ is 
the section of the curvature bundle
determined by the function 
$\map[K]{E}{W}$ for which
\[
d \omega + \frac{1}{2}\left[\omega,\omega\right] =
\frac{1}{2}K \omega \wedge \omega.
\]
\end{definition}

\begin{definition}
A Cartan geometry is \emph{flat}
if its curvature vanishes.
\end{definition}

\subsection{Flatness}

%\begin{definition}
%Suppose that $G$ is a Lie group 
%and $H \subset G$ is a closed subgroup.
%Suppose $M$ is a manifold
%and that $\map[\pi]{E}{M}$ is a $G/H$-geometry.
%If $U \subset M$ is a connected open set,
%and $\map[F]{\pi^{-1} U}{G}$ is a local
%isomorphism with the model, then 
%$F$ induces a map $\map[f]{U}{G/H}$,
%which we call an \emph{associated chart}.
%\end{definition}
%
%\begin{lemma}[Sharpe \cite{Sharpe:1997}]%
%\label{lemma:Flat}
%Suppose that $G$ is a Lie group 
%and $H \subset G$ is a closed subgroup.
%A $G/H$-geometry is flat if and only
%if the set of associated local charts
%forms a $G/H$-structure. 
%Conversely, if $G$ acts faithfully on $G/H$,
%then every $G/H$-structure arises in this
%way uniquely.
%\end{lemma}

\begin{definition}
Let \(H \subset G\) be a closed subgroup of a Lie group.
Let \(X=G/H\) and \(x_0=1\cdot H \in X\).
Suppose \(\left(\delta,h\right)\) is
a developing system, with 
\[
\map[\delta]%
{\left(\tilde{M},\tilde{m}_0\right)}%
{\left(X,x_0\right)}
\]
the a developing map and
\[
\map[h]{\fundgp{M}}{G}
\]
the holonomy morphism.
Treat \(G\) as an \(H\)-bundle \(H \to G \to G/H=X\).
Then the \emph{associated flat Cartan geometry}
has bundle 
\(E = \fundgp{M} \backslash \delta^* G \to M\).
The bundle \(\map{E}{M}\) 
is a principal right \(H\)-subbundle
of the flat \(G\)-bundle \(\tilde{M} \times_h G \to M\).
The pullback 1-form \(g^{-1} \, dg\) is invariant under
this \(\fundgp{M}\)-action, so descends to 
a Cartan connection on \(E\).
\end{definition}

\begin{definition}
Suppose that $E \to M$ is a flat $G/H$-geometry.
Pick a point $e_0 \in E$. Let $m_0 \in M$
be the image of $e_0$. Suppose that
\[
\map{\left(\tilde{M},\tilde{m}_0\right)}%
{\left(M,m_0\right)}
\]
is the universal covering space.
Let $\tilde{E}$ be the pullback bundle
\[
\xymatrix{
\left(\tilde{E},\tilde{e}_0\right) \ar[d] \ar[r] &
\left(E,e_0\right) \ar[d] \\
\left(\tilde{M},\tilde{m}_0\right) \ar[r] & 
\left(M,m_0\right).
}
\]
Then $\tilde{E} \times G$ has 
foliation $\omega=g^{-1} \, dg$.

Let $S \subset \tilde{E} \times G$
be the smallest
$H$-invariant subset of $\tilde{E} \times G$
containing the leaf of the foliation
through $\left(\tilde{e}_0,1\right)$.
It is easy to prove that $S$ is the graph
of a unique $H$-equivariant local 
diffeomorphism
$\tilde{E} \to G$, say 
\[
\map[\Delta]{\tilde{E}}{G}
\]
so that $\Delta^* g^{-1} \, dg = \omega$.
We call $\Delta$ the \emph{bundle developing map}
associated to the choice of point $e_0 \in E$.
\end{definition}

\begin{definition}
Suppose that $E \to M$ is a flat $G/H$-geometry.
Let $\Delta$ be the bundle developing map
associated to $e_0 \in E$.
Set $\delta$ to be the quotient
of $\Delta$ by $H$.
By left invariance of the Pfaffian
system, for every $\gamma \in \fundgp{M}$,
there is some $g \in G$ so that
$\delta \circ \gamma=g \delta$.
We let $h\left(\gamma\right)=g$.
The pair $\left(\delta,h\right)$ 
is a developing system induced by the
flat Cartan geometry $\map{E}{M}$
and the choice of point $e_0 \in E$. 
\end{definition}

\begin{example}
Clearly every flat Cartan geometry is isomorphic 
to the associated flat Cartan geometry of a 
developing system (precisely the
developing system just defined).
If we vary the choice of point
$e_0$ by $H$-action, the developing
map varies by conjugacty.
So the category
of flat Cartan geometries $\map{E}{M}$
with chosen point $e_0 \in E$ modulo
isomorphism is $H$-equivariantly isomorphic
to the category of developing systems.
But the category of $G/H$-structures 
is only a quotient of the category of
developing systems, perhaps not
isomorphic when $G$ doesn't act
faithfully on $G/H$.
\end{example}

\subsection{Automorphism groups}

\begin{definition}
Let \(H \subset G\) be a closed subgroup of a Lie group.
An \emph{isomorphism} of $G/H$-geometries
$\map[\pi_0]{E_0}{M_0}$, with Cartan connection $\omega_0$,
and $\map[\pi_1]{E_1}{M_1}$, 
with Cartan connection $\omega_1$,
is a pair $\left(\varphi,\Phi\right)$ where
$\map[\varphi]{M_0}{M_1}$ is a diffeomorphism,
$\map[\Phi]{E_0}{E_1}$ is an $H$-equivariant
diffeomorphism so that $\Phi^* \omega_1 = \omega_0$
and $\pi_1 \circ \Phi = \varphi \circ \pi_0$.
\end{definition}

\begin{theorem}[Kobayashi \cite{Kobayashi:1995}]
The automorphism group of any Cartan geometry
\(\map{E}{M}\)
is a Lie group acting smoothly on $E$ and $M$.
Each orbit in $E$ is a smooth embedding of 
the automorphism group. 
\end{theorem}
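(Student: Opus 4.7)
The plan is to exploit the fact that the Cartan connection $\omega$ is a coframing (absolute parallelism) on $E$: at each point $e \in E$, the map $\map[\omega_e]{T_eE}{\LieG}$ is a linear isomorphism. An automorphism of the geometry is, by definition, an $H$-equivariant diffeomorphism $\map[\Phi]{E}{E}$ with $\Phi^*\omega=\omega$, so the automorphism group sits inside the group of coframing-preserving diffeomorphisms of $E$, and the result will follow from the classical theorem of Kobayashi that any such group is a Lie group acting with closed embedded orbits.

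First I would establish \emph{rigidity}: because $\Phi^*\omega=\omega$, the map $\Phi$ carries each constant vector field $\vec{A}$ (defined by $\vec{A}\hook\omega=A$) to itself, hence commutes with the flow of every $\vec{A}$. Since these flows locally generate $E$ from any basepoint, $\Phi$ is completely determined by $\Phi(e_0)$ for any chosen $e_0 \in E$. Consequently the evaluation map
\[
\map[\mathrm{ev}_{e_0}]{\Aut{E,\omega}}{E},\qquad \mathrm{ev}_{e_0}(\Phi)=\Phi(e_0),
\]
is injective; I would use it to transport the topology and smooth structure from $E$ onto $\Aut{E,\omega}$.

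Next I would show that the image $N=\mathrm{ev}_{e_0}(\Aut{E,\omega})$ is a closed embedded submanifold of $E$. The idea is that an element $e_1 \in E$ lies in $N$ exactly when the coframing-data near $e_1$ matches that near $e_0$, where the relevant invariants are the curvature function $\map[K]{E}{W}$ and all of its iterated derivatives along the vector fields $\vec{A}$. These invariants are globally defined smooth functions on $E$, and preservation of $\omega$ by $\Phi$ forces $\Phi$ to match these invariants; conversely, matching invariants at a single point is enough to integrate, via the flows of the $\vec{A}$, to a global automorphism. Thus $N$ is the common zero set of smooth functions on $E$, and a standard constant-rank argument gives it the structure of a closed embedded submanifold, making $\Aut{E,\omega}$ a finite-dimensional smooth manifold with $\mathrm{ev}_{e_0}$ a smooth embedding.

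Finally, the group operations on $\Aut{E,\omega}$ are smooth because composition and inversion both reduce, via the rigidity step, to evaluating finite-dimensional ODE flows generated by the framing $\omega$; smoothness of the action on $E$ is just smoothness of the evaluation map $(\Phi,e)\mapsto \Phi(e)$, which follows from smooth dependence of the flows on initial conditions. The induced action on $M=E/H$ is then automatic from $H$-equivariance. The main obstacle is the closed embedded submanifold claim for $N$: it depends on showing that the tower of curvature invariants really does cut out $N$ cleanly and that no sequence of distinct automorphisms can converge to a non-automorphism, which is where the finite-dimensionality of the problem is genuinely used.
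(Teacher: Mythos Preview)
The paper does not give its own proof of this theorem: it is stated with attribution to Kobayashi \cite{Kobayashi:1995} and no argument is supplied, so there is nothing in the paper to compare your proposal against. Your sketch is essentially the classical Kobayashi argument via the absolute parallelism furnished by $\omega$, and the overall strategy---rigidity from the coframing, injectivity of evaluation, then a closed-embedded-submanifold claim for the orbit---is the standard one.

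The one place where your outline is genuinely soft is the claim that the orbit $N$ is cut out cleanly by the tower of curvature invariants. As written this is heuristic: you have not explained why the tower stabilizes at finite order, nor why equality of all invariants at a point actually integrates to a \emph{global} (rather than merely local) automorphism, nor why the resulting level set has constant rank. Kobayashi's own argument sidesteps these issues by working instead on $E\times E$: the diagonal parallelism $(\omega,\omega)$ defines an integrable distribution whose leaves are graphs of local isomorphisms, and the orbit through $e_0$ is recovered as a slice of the leaf through $(e_0,e_0)$. Closedness and the embedded-submanifold property then come from the leaf structure rather than from an invariant-matching argument. Your approach can be made to work, but as it stands the passage from ``invariants agree'' to ``closed embedded submanifold'' is the real content of the theorem and deserves more than a constant-rank gesture.
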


\begin{definition}
An \emph{isomorphism} of $G/H$-structures 
is a diffeomorphism which pulls charts back to charts.
\end{definition}

\begin{definition}
Let \(H \subset G\) be a closed subgroup of a Lie group.
Suppose that $E \to M$ is a Cartan geometry.
Pick a point $e_0 \in E$ and let $m_0 \in M$
be its image. 
Let 
\[
\map{\left(\tilde{M},\tilde{m}_0\right)}%
{\left(M,m_0\right)}
\]
Define $\tilde{E}$ to be the pullback bundle
\[
\xymatrix{
\tilde{E} \ar[d] \ar[r] & E \ar[d] \\
\tilde{M} \ar[r] & M.
}
\]
Pullback $\omega$ to $\tilde{E}$.
Let $\fundgp{M}$ act on $\tilde{M} \times E$
by just acting on $\tilde{M}$.
An \emph{extended automorphism}
of the Cartan geometry with base point $e_0$
is an $H$-equivariant and 
$\fundgp{M}$-equivariant smooth map
\[
\map[\tilde\Phi]{\tilde{E}}{\tilde{E}},
\]
so that $\tilde\Phi^* \omega = \omega$.

If $\tilde\Phi$ is an extended automorphism,
define 
\[
\map[\tilde\varphi]{\tilde{M}}{\tilde{M}}
\]
to be the quotient by $H$-action,
define
\[
\map[\Phi]{E}{E}
\]
to be the quotient by $\fundgp{M}$-action,
and define
\[
\map[\varphi]{M}{M}
\]
to be the quotient by both actions.
\end{definition}

\begin{example}
Clearly if $\varphi$ and $\Phi$ are the identity,
then $\tilde\varphi$ can be any element of
the fundamental group of $M$, and 
$\tilde\Phi$ is completely determined.
\end{example}

\begin{remark}
A diffeomorphism 
$\map[\psi]{\tilde{M}}{\tilde{M}}$
is the lift $\psi=\tilde\varphi$
of a diffeomorphism 
$\map[\varphi]{M}{M}$
just when there is a group automorphism 
\[
\mapto{\gamma \in \fundgp{M}}{\gamma' \in \fundgp{M}}
\]
so that $\psi \circ \gamma = \gamma' \circ \psi$.
\end{remark}

\begin{example}
Every automorphism $\left(\varphi,\Phi\right)$
of a Cartan geometry lifts to some
extended automorphism, by taking 
$\tilde\varphi$ to be any lift to a map
on the universal covering space,
and then defining $\tilde\Phi$ by
\[
\begin{tikzpicture}[
back line/.style={},
cross line/.style={preaction={draw=white, -,
line width=6pt}}]
\matrix (m) [matrix of math nodes,
row sep=3em, column sep=3em,
text height=1.5ex,
text depth=0.25ex]{
& \tilde{E} & & E \\
\tilde{E}  & & E \\
& \tilde{M} & & M \\
\tilde{M} & & M \\
};
\path[->]
(m-1-2) edge (m-1-4)
edge  node[above] {$\tilde\Phi$} (m-2-1)
edge [back line] (m-3-2)
(m-1-4) edge (m-3-4)
edge node[above] {$\Phi$} (m-2-3)
(m-2-1) edge [cross line] (m-2-3)
edge (m-4-1)
(m-3-2) edge [back line] (m-3-4)
edge [back line] node[above] {$\tilde\varphi$} (m-4-1)
(m-4-1) edge (m-4-3)
(m-3-4) edge node[above] {$\varphi$} (m-4-3)
(m-2-3) edge [cross line] (m-4-3);
\end{tikzpicture}
\]
\end{example}

\begin{definition}
Let $\Aut{E}$ be the automorphism group,
and
\[
1 \to \fundgp{M,m_0} \to \ExtAut{E} \to \Aut{E} \to 1
\]
be the obvious exact sequence of the extended
automorphism group.
\end{definition}

\begin{definition}
Let \(H \subset G\) be a closed subgroup of a Lie group.
Suppose that $E \to M$ is a flat Cartan geometry.
Pick a point $e_0 \in E$ and let $m_0 \in M$
be its image. 
Pick $\tilde\Phi \in \ExtAut{E}$,
and as above define $\varphi,\tilde\varphi,\Phi$.
Pullback $\omega$ to $\tilde{E}$.
Suppose that
\[
\map[\Delta]{\tilde{E}}{G}
\]
is the developing map of $E$,
so that $\Delta^* g^{-1} \, dg = \omega$.
Define
\[
h\left(\tilde\Phi\right)
=
\Delta \circ \tilde\Phi \left(\tilde{e}_0\right)
\]
and call the map 
$\map[h]{\ExtAut{E}}{G}$ 
the \emph{extended holonomy morphism}.
\end{definition}

\begin{lemma}
The extended holonomy morphism of any
developing system on any connected
manifold is a Lie group
morphism, which agrees on $\fundgp{M}$
with the holonomy morphism.
\end{lemma}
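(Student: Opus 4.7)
The plan is to verify three things about the map $\map[h]{\ExtAut{E}}{G}$: that it is a group homomorphism, that it is smooth, and that its restriction to $\fundgp{M}$ is the ordinary holonomy morphism $h$.

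The central observation is a uniqueness property of the bundle developing map. Because the Pfaffian system $\omega = g^{-1}\,dg$ on $\tilde{E} \times G$ is $H$-invariant and its leaves are precisely graphs over $\tilde E$, any $H$-equivariant local diffeomorphism $\tilde\Delta\colon \tilde E \to G$ satisfying $\tilde\Delta^* g^{-1}\,dg = \omega$ must equal $L_{g_0} \circ \Delta$ for a unique $g_0 \in G$, where $L_{g_0}$ denotes left translation; this is essentially the content of the construction of $\Delta$ from the leaf through $\left(\tilde e_0,1\right)$. Given any $\tilde\Phi \in \ExtAut{E}$, the composition $\Delta \circ \tilde\Phi$ is still $H$-equivariant and still pulls $g^{-1}\,dg$ back to $\omega$ (since $\tilde\Phi^*\omega = \omega$), so there is a unique $g_{\tilde\Phi} \in G$ with $\Delta \circ \tilde\Phi = L_{g_{\tilde\Phi}} \circ \Delta$. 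Evaluating at $\tilde e_0$ and using $\Delta(\tilde e_0) = 1$ identifies $g_{\tilde\Phi} = \Delta(\tilde\Phi(\tilde e_0)) = h(\tilde\Phi)$.

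From this the homomorphism property falls out immediately: for $\tilde\Phi_1, \tilde\Phi_2 \in \ExtAut{E}$ one has $\Delta \circ (\tilde\Phi_1 \tilde\Phi_2) = L_{h(\tilde\Phi_1)} \circ \Delta \circ \tilde\Phi_2 = L_{h(\tilde\Phi_1)h(\tilde\Phi_2)} \circ \Delta$, and evaluating at $\tilde e_0$ yields $h(\tilde\Phi_1 \tilde\Phi_2) = h(\tilde\Phi_1) h(\tilde\Phi_2)$. Agreement with the ordinary holonomy on $\fundgp{M}$ is the same argument run on $\gamma \in \fundgp{M}$: the relation $\delta \circ \gamma = h(\gamma)\,\delta$ lifts through $H$-equivariance and the same uniqueness statement to $\Delta \circ \gamma = L_{h(\gamma)} \circ \Delta$, which evaluates at $\tilde e_0$ to the original $h(\gamma)$.

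For smoothness I would invoke Kobayashi's theorem to give $\Aut{E}$ the structure of a Lie group acting smoothly on $E$; the exact sequence $1 \to \fundgp{M} \to \ExtAut{E} \to \Aut{E} \to 1$ has discrete kernel, so $\ExtAut{E}$ inherits a Lie group structure whose action on $\tilde E$ is smooth. Then $h$ factors as the smooth evaluation $\tilde\Phi \mapsto \tilde\Phi(\tilde e_0)$ followed by the smooth developing map $\Delta$. The only real content is the uniqueness lemma for $\Delta$ and its left translates, but this is forced by the construction of $\Delta$ as a leaf of $\omega = g^{-1}\,dg$, so no new work is needed there.
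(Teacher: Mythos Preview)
Your proof is correct, and cleaner than the paper's. Both arguments rest on the same underlying fact---that an $H$-equivariant map $\tilde E \to G$ pulling $g^{-1}\,dg$ back to $\omega$ is determined by its value at one point---but you invoke this as a global uniqueness statement and read off multiplicativity in one line, whereas the paper reproves it locally by integrating along explicit paths $e_1(t), e_2(t)$ and concatenating them, then patches across components of $\tilde E$ by $H$-equivariance. Your formulation $\Delta \circ \tilde\Phi = L_{h(\tilde\Phi)} \circ \Delta$ is the conceptual heart of the matter; the paper's path computation is just the proof of that identity unwound. You also address smoothness (via Kobayashi's theorem and the evaluation map), which the paper's proof does not mention at all despite the lemma asserting a Lie group morphism. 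One small point worth making explicit in your write-up: the uniqueness of $g_0$ in $\tilde\Delta = L_{g_0} \circ \Delta$ uses that $H$ acts transitively on the components of $\tilde E$ (since $\tilde M$ is connected), so $H$-equivariance plus agreement on the component of $\tilde e_0$ forces global agreement.
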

\begin{proof}
It is sufficient to prove the result
on $\tilde{M}$, so we can assume
that $M$ is simply connected.
Therefore the extended automorphism
group is really just the automorphism
group.
Pick two automorphisms 
$\Phi_1, \Phi_2$
and let $\Phi=\Phi_2 \circ \Phi_1$.
Let 
\[
g_j = 
\Delta\left(\Phi_j\left(e_0\right)\right),
\]
and
\[
g = 
\Delta\left(\Phi\left(e_0\right)\right).
\]
We need to prove that
\[
g=g_2 g_1.
\]
Start by assuming that
$\Phi_1\left(e_0\right)$
and
$\Phi_2\left(e_0\right)$
lie in the same path component as $e_0$.
Take smooth paths $e_1(t), e_2(t) \in E$ so that
$e_1(0)=e_2(0)=e_0$ and 
$e_1(1)=\Phi_1\left(e_0\right)$
and
$e_2(1)=\Phi_2\left(e_0\right)$
and let $e(t)=e_1(t)$ for $0 \le t \le 1$
and then $e(t)=\Phi \circ e_2(t-1)$
for $1 \le t \le 2$.
Let $g_1(t)=\Delta\left(e_1\left(t\right)\right)$
and $g_2(t)=\Delta\left(e_2\left(t\right)\right)$
and $g(t)=\Delta\left(e(t)\right)$.
Then for $t \ge 1$, 
\[
g(t)^{-1} \frac{dg}{dt} = g_2(t-1)^{-1} 
\left.\frac{dg_2}{dt}\right|_{t-1},
\]
so
\[
g(t) = g_1(1) g_2(t-1).
\]
Let $t=2$.

If these $\Phi_1\left(e_0\right)$ and
$\Phi_2\left(e_0\right)$ do not lie in 
the same path component as $e_0$, then
we make use of $H$-equivariance to
get them to.
\end{proof}

\begin{lemma}\label{lemma:Extension}
Suppose that $H \subset G$ is a closed
subgroup of a Lie group.
Suppose that \(\left(\delta,h\right) \) is a 
developing system for a $G/H$-structure
on a manifold $M$.
Let \(\map{\tilde{M}}{M}\)
be the universal covering space,
Suppose that $L$ is a Lie group
acting faithfully on $\tilde{M}$. 
Suppose that the holonomy morphism
$h$ extends to a Lie group morphism 
$\map[h]{L}{G}$, and 
that the developing map $\delta$ satisfies
\[
\delta \circ \ell =  h\left(\ell\right) \delta,
\]
for any $\ell \in L$. Then there is 
a unique injective Lie group 
morphism 
\[
L \to \ExtAut{E}
\]
commuting with the action on $\tilde{M}$.
\end{lemma}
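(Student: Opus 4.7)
The plan is to construct $L\to\ExtAut{E}$ by hand on the associated flat Cartan geometry. Writing $\tilde{E}=\delta^{*}G\subset\tilde{M}\times G$ for the pairs $(\tilde{m},g)$ with $\delta(\tilde{m})=gH$, the pullback Cartan connection $\omega$ is the restriction of the Maurer--Cartan form $g^{-1}\,dg$. For each $\ell\in L$ I would define
\[
\tilde\Phi_{\ell}(\tilde{m},g)=\bigl(\ell\cdot\tilde{m},\,h(\ell)g\bigr).
\]
The equivariance hypothesis $\delta\circ\ell=h(\ell)\delta$ is exactly what is needed to see that $\tilde\Phi_{\ell}$ lands in $\tilde{E}$; the right $H$-action commutes with left translation by $h(\ell)$, so $\tilde\Phi_{\ell}$ is an $H$-bundle map; and left invariance of the Maurer--Cartan form gives $\tilde\Phi_{\ell}^{*}\omega=\omega$ at once.

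Next I would verify that each $\tilde\Phi_{\ell}$ is $\fundgp{M}$-equivariant in the generalised sense of the remark preceding the lemma, so that it genuinely belongs to $\ExtAut{E}$. The deck action on $\tilde{E}$ is $\gamma\cdot(\tilde{m},g)=(\gamma\tilde{m},h(\gamma)g)$, and extension of $h$ across $L$ combined with $L$-equivariance of $\delta$ forces conjugation by $\ell$ on $\tilde{M}$ to send deck transformations to deck transformations, matched by conjugation by $h(\ell)$ on $h(\fundgp{M})\subset G$. The multiplicativity $\tilde\Phi_{\ell_{1}\ell_{2}}=\tilde\Phi_{\ell_{1}}\circ\tilde\Phi_{\ell_{2}}$ is then immediate from the morphism property of $h$ on $L$, and smoothness of $\ell\mapsto\tilde\Phi_{\ell}$ follows from joint smoothness of the $L$-action on $\tilde{M}$ and of $h$. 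Injectivity is immediate from faithfulness of the $L$-action, since $\tilde\Phi_{\ell}=1$ forces $\ell\cdot\tilde{m}=\tilde{m}$ for all $\tilde{m}$. Uniqueness of the lift is the standard rigidity of Cartan geometries: any $H$-equivariant map preserving $\omega$ is determined by its value at a single point, so once the base map on $\tilde{M}$ is prescribed the extension $\tilde\Phi$ is forced.

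The main obstacle is the $\fundgp{M}$-equivariance step, which is what makes the formula above actually define an element of $\ExtAut{E}$ rather than just a diffeomorphism of $\tilde{E}$. Concretely, one must unpack the hypotheses to show that for every $\gamma\in\fundgp{M}$ there is a $\gamma'\in\fundgp{M}$ with $\tilde\Phi_{\ell}\circ\gamma=\gamma'\circ\tilde\Phi_{\ell}$, both on the $\tilde{M}$ factor (where $\gamma'$ is conjugation by $\ell$ applied to $\gamma$, which must land back in the deck group) and on the $G$ factor (where the compatible identity $h(\ell)h(\gamma)=h(\gamma')h(\ell)$ must hold). Once this compatibility is cleanly extracted from faithfulness, the morphism property of $h$, and $L$-equivariance of $\delta$, the rest of the argument is routine bookkeeping with the explicit formula above.
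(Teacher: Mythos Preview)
Your proposal is correct and follows exactly the paper's approach: the paper's proof consists of nothing more than writing down $\tilde{E}=\delta^{*}G$ and the formula $\tilde\Phi(\tilde m,g)=(\ell\tilde m,h(\ell)g)$, leaving all the verifications you spell out to the reader. Your version is substantially more thorough, and you are right to single out the $\fundgp{M}$-equivariance as the one point with actual content; note, though, that this step tacitly uses that $\fundgp{M}$ is normal in $L$ (which is automatic in the paper's applications, where $L$ is abelian), so your phrase ``forces conjugation by $\ell$ \dots\ to send deck transformations to deck transformations'' is really recording an implicit hypothesis rather than a consequence.
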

\begin{proof}
Since the Cartan geometry is 
flat, we can assume that the Cartan
geometry is constructed from the
developing map, so $\tilde{E}=\delta^*G$.
Each element of $\delta^*G$ is a pair
$\left(\tilde{m},g\right)$
for which $g^{-1} \delta\left(\tilde{m}\right)=x_0 \in X$.
We define an extended automorphism
\[
\tilde{\Phi}\left(\tilde{m},g\right)
=
\left(\ell\tilde{m},h\left(\ell\right)g\right).
\]
\end{proof}

\section{Classification on curves}

\begin{longtable}{AAAAA}
\caption{The classification of 1-dimensional
connected complex homogeneous spaces $G/H$ with
$G$ acting holomorphically, faithfully 
and transitively on $G/H$,
and of all connected
compact complex curves admitting
holomorphic $G/H$-structures,
and a description of the moduli space
of $G/H$-structures on each curve.
For the second row,
see example~\vref{example:G1G2G3}.
}%
\label{table:OneD} \\
\textbf{Homogeneous} & 
\textbf{Group} & 
\textbf{Stabilizer} &
\textbf{Curve} &
\textbf{Moduli} 
\\
\textbf{curve} & 
& 
\textbf{subgroup} &
& 
\textbf{space}
\\
\endfirsthead
\multicolumn{5}{c}%
{{\tablename\ \thetable{}: continued from previous page}} \\
\textbf{Homogeneous} & 
\textbf{Group} & 
\textbf{Stabilizer} &
\textbf{Curve} &
\textbf{Moduli} 
\\
\textbf{curve} & 
& 
\textbf{subgroup} &
& 
\textbf{space}
\\
\endhead
\endfoot
\endlastfoot
\C{} &
\C{} &
0 &
\C{}/\Lambda & 
\Cohom{0}{\kappa} \setminus 0 \cong \C{\times}
\betweenEntries
\C{} &
A \ltimes \C{} &
A &
\C{}/\Lambda & 
\text{see lemma~\ref{lemma:Astructures}}
\betweenEntries
\C{} &
\C{\times} \ltimes \C{} &
\C{\times} &
\C{}/\Lambda & 
\Cohom{0}{\kappa} \cong \C{}
\betweenEntries
\C{}/\Lambda_0 &
\C{}/\Lambda_0 &
0 &
\C{}/\Lambda_1 &
\Cohom{0}{\kappa} \cong \C{}
\betweenEntries
\C{}/\Lambda_0 &
\Z{}_2 \ltimes \C{}/\Lambda_0 &
\Z{}_2 &
\C{}/\Lambda_1 &
\Cohom{0}{\kappa^{\otimes 2}} \cong \C{}
\betweenEntries
\C{}/\Lambda_{\text{hex}} &
\Z{}_6 \ltimes \C{}/\Lambda_{\text{hex}} &
\Z{}_6 &
\C{}/\Lambda_1 &
\Cohom{0}{\kappa^{\otimes 6}} \cong \C{}
\betweenEntries
\C{}/\Lambda_{\text{hex}} &
\Z{}_3 \ltimes \C{}/\Lambda_{\text{hex}} &
\Z{}_3 &
\C{}/\Lambda_1 &
\Cohom{0}{\kappa^{\otimes 3}} \cong \C{}
\betweenEntries
\C{}/\Lambda_{\text{square}} &
\Z{}_4 \ltimes \C{}/\Lambda_{\text{square}} &
\Z{}_4 &
\C{}/\Lambda_1 &
\Cohom{0}{\kappa^{\otimes 4}} \cong \C{}
\betweenEntries
\C{\times} &
\C{\times} &
1 &
\C{}/\Lambda & 
\Cohom{0}{\kappa} \setminus 0 \cong \C{\times}
\betweenEntries
\C{\times} &
\Z{}_2 \ltimes \C{\times} &
\Z{}_2 &
\C{}/\Lambda & \Cohom{0}{\kappa^{\otimes 2}}
\setminus 0 \cong \C{\times} 
\betweenEntries
\Proj{1} &
\PSL{2,\C{}} &
\begin{bmatrix}
a & b \\
0 & \frac{1}{a}
\end{bmatrix}
& \C{}/\Lambda & \Cohom{0}{\kappa^{\otimes 2}} \cong \C{}
\betweenEntries
\Proj{1} &
\PSL{2,\C{}} &
\begin{bmatrix}
a & b \\
0 & \frac{1}{a}
\end{bmatrix}
& C_{g \ge 2} & \Cohom{0}{\kappa^{\otimes 2}} \cong \C{3g-3}
\betweenEntries
\Proj{1} & 
\PSL{2,\C{}} &
\begin{bmatrix}
a & b \\
0 & \frac{1}{a}
\end{bmatrix}
 & \Proj{1} & * \\ 
\end{longtable}

A holomorphic Cartan geometry on a complex
curve must be flat, because the curvature
bundle is the zero bundle. If
it is effective, then
it must arise from a unique locally
homogeneous geometric structure.
Table~\ref{table:OneD} gives a brief survey 
of the locally homogeneous structures on compact
complex curves, modelled on complex-homogeneous
curves with faithful
actions. We will now describe all such
structures explicitly.

\begin{example}\label{example:TranslationAndAffine}
Every elliptic curve $E=\C{}/\Lambda$ 
has an obvious translation structure, i.e. a 
$G/H$-structure where 
$G=\C{}$ and $H=0$.
The developing map is
\[
\mapto[\delta]{z \in \C{}}{cz \in \C{}}
\]
for any constant $c \ne 0$
and the holonomy morphism is
\[
\mapto[h]{\lambda \in \Lambda}{c\lambda \in \C{}}.
\]
A translation structure is nothing but
a holomorphic nowhere vanishing 1-form.
The automorphisms of any translation structure
are the translations.
\end{example}

\begin{example}
Every elliptic curve $E=\C{}/\Lambda$ has an obvious 
affine structure,
i.e. a $G/H$-structure where $G=\C{\times} \rtimes \C{}$, $H=\C{\times}$.
The developing map is the identity
\[
\mapto[\delta]{z \in \C{}}{z \in \C{}}
\]
and the holonomy morphism is
\[
\mapto[h]{\lambda \in \Lambda}{\left(1,\lambda\right) \in G}.
\]
There are more affine structures:
if the elliptic curve is $E=\C{}/\Lambda$,
then we can produce, for every $c \in \C{\times}$,
an affine structure on $E$ via the developing
map 
\[
\mapto[\delta]{z \in \C{}}{e^{cz} \in \C{\times}},
\]
with holonomy morphism
\[
\mapto[h]{\lambda \in \Lambda}{\left(e^{c\lambda},0\right) \in G}.
\]
Since the developing map of an affine
structure is unique precisely up to affine
transformations, we could just as well use the
map
\[
\mapto{z}{\frac{e^{cz}-1}{c}},
\]
and think of \mapto{z}{z} as the case $c=0$.
To forget the choice of affine coordinate
$z$, we can identify the moduli space of affine structures
on an elliptic curve $E$ with 
the space of holomorphic 1-forms on $E$:
each 1-form is expressed
in terms of an affine chart as $c \, dz$.
It is well known \cite{Loray/MarinPerez:2009}
that every holomorphic affine structure
arises uniquely in this way.
The automorphism group of the affine
structure with linear developing map
is the biholomorphism group of the elliptic curve $C$.
The automorphism group of any of the affine
structures with nonlinear developing map
is the group of translations.
\end{example}

\end{unabridged}

\begin{abridged}

\section{Example}

\end{abridged}

\begin{example}\label{example:G1G2G3}
%Pick a positive integer $n$.
%The group $\Z{}_n$ of roots of unity
%acts on $\C{}$ by multiplication, and
%then the group $G_1=\Z{}_n \ltimes \C{}$
%acts on $\C{}$ by affine transformations
%with dilations from $\Z{}_n$. Similarly
%pick any complex number $\mu$ with
%$|\mu| > 1$. Then the group
%$G_2=\Z{}$ acts on $\C{}$ by multiplication
%by integer powers of $\mu$. 
%If we pick both an integer $n$ and
%a complex number $\mu$ with $|\mu|>1$,
%then we can get
%\[
%G_3 = \left(\Z{}_n \times \Z{}\right) \ltimes \C{}
%\]
%to act by multiplication by $n$-th
%roots of 1 and powers of $\mu$.
%For example, any translation structure
%on an elliptic curve is a $G_3/H_3$-structure.
%The translation structures form
%one component of the moduli space,
%comprising an elliptic curve.
%If $\C{}/\Lambda$ is 
%an unramified covering space of 
%$\C{\times}/\left(z \cong \mu z\right)$
%then there is an additional elliptic
%curve component, corresponding
%to structures whose developing
%map is given by an exponential
%function; we leave the reader
%to work out the details.
%
%More generally, pick any
Pick any
countable abelian group $A \subset \C{\times}$
with the discrete topology.
Set $G = A \ltimes \C{}$ and $H=A$.
The group $G$ acts holomorphically,
faithfully and transitively on 
$X=\C{}=G/H$.
Every complex Lie group
$G$ acting holomorphically,
faithfully and transitively
on $\C{}$ and not equal to the
affine group of $\C{}$ has this
form. The group $A$ need not
be finitely generated; for
example $A$ could be the group
of all roots of 1.
Every translation
structure on any elliptic curve
is a $G/H$-structure.

Let 
\[
A'
=
\set{\alpha \in \C{}|e^{\alpha} \in A},
\]
giving the exact sequence of
abelian groups 
$0 \to \Z{} \to A' \to A \to 0$.
Pick any elliptic curve $C=\C{}/\Lambda$.
A \emph{grain} for $\Lambda$ and $A$
is a complex number $c \ne 0$ so that 
$c \Lambda \subset A'$.
(If $A' \subset \C{}$ is a lattice, then
a grain is a covering map
$C \to \C{}/A'$.)
%A \emph{grain} is morphism
%$\map[a]{\Lambda}{A}$ 
%for which there is a choice of
%two elements $\lambda_1, \lambda_2$
%generating $\Lambda$
%so that, if we let 
%$a_1=a\left(\lambda_1\right)$
%and 
%$a_2=a\left(\lambda_2\right)$, then
%\[
%\frac
%{\log \varphi\left(a_1\right)}%
%{\log \varphi\left(a_2\right)}
%=
%\frac{\lambda_1}{\lambda_2},
%\]
%for some choices of logarithms.
%Given a grain, let
%\[
%c = 
%\frac
%{\log \varphi\left(a_1\right)}%
%{\lambda_1}.
%\]
Suppose that $c$ is a grain.
Then for any constant $k \ne 0$, the developing map
\[
\mapto[\delta]{z \in \C{}}{ke^{cz} \in \C{}}
\]
and holonomy morphism
\[
\mapto[h]{\lambda \in \Lambda}%
{\left(e^{c \lambda},0\right) \in G}
\]
gives a $G/H$-structure. 
Let $\Gamma\left(\Lambda,A\right)$
be the set of grains. {}%
\begin{abridged}%
Every $G/H$-structure is a translation
structure or constructed from a grain.
The moduli
space of $G/H$-structures is
\[
\left(
\Cohom{0}{\kappa}/A 
\right)
\cup 
\bigcup_{c \in \Gamma\left(\Lambda,A\right)}
\C{\times}/A.
\]
\end{abridged}

The automorphisms of
a $G/H$-structure
induced from a translation structure
are the automorphisms
of $\C{}/\Lambda$, say $z \mapsto az+b$,
for which $a \in A$ and $a \Lambda=\Lambda$.
So the automorphism group is 
$\Z{}_n \ltimes \C{}$ for some
integer $n=1,2,3,4$ or $6$.

The automorphisms of a $G/H$-structure
with grain $c \ne 0$
are the translations $z \in \C{}/\Lambda$
for which $e^{cz} \in A$, a finite group.
In particular, these $G/H$-structures are
not homogeneous.
\end{example}

\begin{unabridged}

\begin{lemma}\label{lemma:Astructures}
Take a countable abelian Lie group $A \subset \C{\times}$
with the discrete topology.
Let $G = A \ltimes \C{}$, $H=A$ and let $X=G/H=\C{}$.
Suppose that $C$ is a compact complex curve
bearing a $G/H$-structure. Then that
$G/H$-structure is one of the examples
in example~\vref{example:G1G2G3}, up
to conjugacy. The moduli space of
$G/H$-structures induced by
translation structures is 
$\left(\Cohom{0}{\kappa} \setminus 0\right)/A$,
which might not be Hausdorff.
The moduli
space of $G/H$-structures is
\[
\left(
\Cohom{0}{\kappa}/A 
\right)
\cup 
\bigcup_{c \in \Gamma\left(\Lambda,A\right)}
\C{\times}/A.
\]
\end{lemma}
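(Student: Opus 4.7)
The plan is to first reduce to the case of an elliptic curve via a canonical bundle argument, then classify developing maps by differentiating to apply Liouville on the compact quotient, and finally compute moduli by modding out by $G$-conjugation.

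First, I would argue that $C$ must be an elliptic curve. The tautological inclusion $G = A \ltimes \C{} \hookrightarrow \C{\times} \ltimes \C{}$ into the full complex affine group makes any $G/H$-structure on $C$ into a holomorphic affine structure. With developing map $\map[\delta]{\tilde{C}}{\C{}}$ and holonomy $h(\lambda) = (a_\lambda, b_\lambda)$, differentiating the equivariance $\delta(\gamma \tilde{z}) = a_\gamma \delta(\tilde{z}) + b_\gamma$ shows that $\delta' \, dz$ descends to a nowhere vanishing holomorphic section of $\kappa_C \otimes L$, where $L$ is the flat holomorphic line bundle with holonomy $\lambda \mapsto a_\lambda$. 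Since a flat line bundle has degree zero, nonvanishing forces $\deg \kappa_C = 0$, so $g(C)=1$. Hence $C = \C{}/\Lambda$ with universal cover $\tilde{C} = \C{}$.

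Second, on $\tilde{C} = \C{}$ with standard coordinate $z$, the relation $\delta'(z+\lambda) = a_\lambda \delta'(z)$ shows that the logarithmic derivative $\delta''/\delta'$ is $\Lambda$-periodic, hence a holomorphic function on the compact curve $C$, hence a constant $\alpha \in \C{}$. Integrating twice gives $\delta(z) = kz + \beta$ when $\alpha = 0$, or $\delta(z) = (k/\alpha)e^{\alpha z}+\beta$ when $\alpha \ne 0$, for some $k \ne 0$.

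Third, I identify each case with a construction from example~\ref{example:G1G2G3}. The case $\alpha = 0$ reduces after conjugation by $(1,-\beta) \in G$ to $\delta(z) = kz$, with holonomy $h(\lambda) = (1, k\lambda) \in A \ltimes \C{}$ automatic since $1 \in A$, giving the translation structure whose 1-form is $k \, dz$. The case $\alpha \ne 0$ reduces by a translation conjugation to $\delta(z) = \tilde{k} e^{\alpha z}$, with holonomy $h(\lambda) = (e^{\alpha \lambda}, 0)$; this lands in $A \ltimes \C{}$ precisely when $e^{\alpha \lambda} \in A$ for every $\lambda \in \Lambda$, i.e.\ when $\alpha \in \Gamma(\Lambda, A)$ is a grain.

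Fourth, I compute the moduli. After the normalizations above, the only remaining conjugation freedom is by elements $(a_0, 0) \in G$ with $a_0 \in A$, which acts as $k \mapsto a_0 k$ and $\tilde{k} \mapsto a_0 \tilde{k}$ respectively. The translation-induced moduli is therefore $\C{\times}/A$, identified with $(\Cohom{0}{\kappa} \setminus 0)/A$ via $k \leftrightarrow k\,dz$, and each grain $c$ contributes an additional $\C{\times}/A$, yielding the claimed union. I expect the main difficulty to be step one: tracking $\delta' \, dz$ as a nowhere vanishing section of $\kappa_C \otimes L$ and verifying that $L$ is a genuinely flat holomorphic line bundle of degree zero requires some bookkeeping. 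The Liouville-type argument in step two and the conjugation calculation in steps three and four should then be routine.
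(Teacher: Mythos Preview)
Your proof is correct and follows essentially the same route as the paper. The paper's proof is a terse one-liner: it observes that $G \subset \C{\times}\ltimes\C{}$, so the developing map coincides (up to affine conjugation) with that of an affine structure, then invokes the classification of affine structures on compact curves established earlier (linear or $z\mapsto ke^{cz}$ on an elliptic curve) and finally notes that the holonomy $h(\lambda)=(e^{c\lambda},0)$ lands in $G$ exactly when $c$ is a grain. Your argument unpacks this citation: your step one reproves that $C$ must be elliptic via the $\deg(\kappa_C\otimes L)=0$ trick, and your step two reproves the affine classification via the observation that $\delta''/\delta'$ is $\Lambda$-periodic and hence constant. The conjugation and moduli computation in your steps three and four are the same as what the paper leaves implicit. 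So the only difference is that you give a self-contained proof where the paper appeals to the earlier affine-structure example; the underlying mechanism is identical.
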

\begin{remark}
The topology on this moduli space is not clear.
However, it is clear that there are choices
of $\Lambda$ and $A$ for which the moduli
space is not Hausdorff. 
For example, pick any
$\alpha \in \C{\times}$ 
with $\left|\alpha\right|>1$
and let $A \subset \C{\times}$ 
be the group generated
by $\alpha$ together with all roots of unity
and all nonzero rational numbers.
Let $\Lambda=\left<2\pi i, \log \alpha\right>
\subset \C{}$. 
The moduli stack
of $\left(A \ltimes \C{}\right)/A$-structures
on $\C{}/\Lambda$ 
is not representable by a complex analytic space,
or even by a complex analytic orbispace.
\end{remark}
\begin{proof}
Since $G$ is a subgroup
of the affine group, the developing
map must be the same, up to affine
transformation, as the developing
map of some affine structure. We can conjugate
to eliminate any translation.
So either
a translation structure or
\begin{align*}
\delta(z) &= ke^{cz}, \text{ and } \\
h(\lambda) &= \left(e^{c\lambda},0\right)
\end{align*}
But $h$ is valued in $G$, so $c$ is a grain.
%Pick any generators $\lambda_1, \lambda_2 \in \Lambda$,
%and let 
%$a_1=a\left(\lambda_1\right)$
%and
%$a_2=a\left(\lambda_2\right)$.
%Then clearly
%\[
%\frac
%{\log \varphi\left(a_1\right)}%
%{\log \varphi\left(a_2\right)}
%=
%\frac{\lambda_1}{\lambda_2},
%\]
%for some choice of logarithms,
%and
%\[
%c = 
%\frac
%{\log \varphi\left(a_1\right)}%
%{\lambda_1}.
%\]
\end{proof}

\begin{example}
A \emph{projective structure} is a 
$G/H$-structure where $G=\PSL{2,\C{}}$ and $H$
is the Borel subgroup of matrices of the form
\[
\begin{bmatrix}
a & b \\
0 & \frac{1}{a}
\end{bmatrix}.
\]
Every affine structure imposes a projective structure
via the obvious inclusion $\C{}=\Proj{1} \setminus \infty \to \Proj{1}$.
Among the various affine structures on elliptic curves
given above, each induces a distinct projective structure,
except for pairs $c$ and $-c$,
which are clearly related by a linear fractional
transformation 
\[
e^{-cz} = \frac{1}{e^{cz}}.
\]
It turns out that these are all of the projective structures:
we can identify the moduli space of
projective structures on any elliptic curve
with the space of quadratic
differentials $c^2 \, dz^2$.
For $c \ne 0$, the developing map is
\[
\mapto[\delta]{z \in \C{}}{e^{cz} \in \Proj{1}}
\]
and the holonomy morphism is
\[
\mapto[h]{\lambda \in \Lambda}%
{
\begin{bmatrix} 
e^{c\lambda/2} & 0\\
0 & e^{-c\lambda/2}
\end{bmatrix}
\in G}.
\]
For $c=0$, the developing map is
\[
\mapto[\delta]{z \in \C{}}{z \in \Proj{1}}
\]
and the holonomy morphism is
\[
\mapto[h]{\lambda \in \Lambda}%
{
\begin{bmatrix} 
1 & \lambda \\
0 & 1
\end{bmatrix}
\in G}.
\]

The automorphism group of the projective
structure at $c=0$ is the biholomorphism
group of the elliptic curve.
At $c \ne 0$, the automorphism
group is $\Z{}_2 \ltimes \C{}$,
where $\Z{}_2$ acts on $\C{}/\Lambda$
by $z \mapsto -z$, and $\C{}$ acts
by translations.
\end{example}

\begin{example}
Clearly $\Proj{1}$ has an obvious projective
structure
The developing map is
\[
\mapto[\delta]{z \in \Proj{1}}{z \in \Proj{1}}
\]
and the holonomy morphism is
\[
\mapto[h]{1}%
{
\begin{bmatrix} 
1 & 0\\
0 & 1
\end{bmatrix}
\in G}.
\]
The biholomorphism group is $G=\PSL{2,\C{}}$.
This is the only effective 
holomorphic Cartan geometry on $\Proj{1}$.
\end{example}

\begin{example}[Cartan \cite{Cartan:1992} part 3 p. 3]%
\label{example:ProjConnOnCurve}
Suppose that $C$ is a Riemann surface.
Suppose that $\eta$ is a quadratic
differential on $C$, say $\eta = f(z) \, dz$
in some local coordinate $z$. Let $F$ 
be any solution of
\[
\left\{F,z\right\}=f,
\]
where $\left\{F,z\right\}$ indicates
the Schwarzian derivative.
Use $F$ as a local biholomorphism
of an open subset of $C$ to $\Proj{1}$.
Since each solution $F$ is unique up to linear
fractional transformation, the
collection of all solutions 
forms a $\Proj{1}$-structure.
A developing map is precisely
a global solution $\delta=F$
on the universal covering space of $C$.
There is no elementary expression
for the holonomy; see 
Loray and  Mar{\'{\i}}n P{\'e}rez
\cite{Loray/MarinPerez:2009}.
The automorphism group is the finite 
group of biholomorphisms preserving
the quadratic differential.
\end{example}

\begin{example}\label{example:TorusTorus}
Suppose that $\Lambda_0, \Lambda_1 \subset V$
are lattices in a finite dimensional complex vector space $V$.
(We are particularly interested in $V=\C{}$.)
Let $T_j = V/\Lambda_j$. Then $T_1$ has a $T_0$-structure, 
i.e. a $G/H$-structure where $G=T_0$ and $H=0$,
with developing map
\[
\mapto[\delta]{z \in V}{z + \Lambda_0 \in T_0}
\]
and holonomy morphism
\[
\mapto[h]{\lambda \in \Lambda_1}{\lambda + \Lambda_0 \in T_0}.
\]
The automorphism group is the group of translations
together with all those linear isomorphisms
of $V$ which preserve both lattices.
\end{example}

\begin{lemma}\label{lemma:torusTorusTorus}
Suppose that $\Lambda_0 \subset V_0$ and 
$\Lambda_1 \subset V_1$
are lattices in finite dimensional complex 
vector spaces $V_0$
and $V_1$ of equal dimension. Let $T_j=V_j/\Lambda_j$. 
Up to isomorphism, the only $T_0$-structure on $T_1$ 
is obtained by taking a linear isomorphism 
\map[L]{V_1}{V_0} that makes $\Lambda_0$
a sublattice of $\Lambda_1$ and then following
the recipe of example~\vref{example:TorusTorus}.
\end{lemma}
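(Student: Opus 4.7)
The plan is to reduce to a developing system via Remark~\ref{remark:Goldman} and then rigidify using compactness of $T_1$.

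Since $G=T_0$ acts on $G/H=T_0$ faithfully by translation, Remark~\ref{remark:Goldman} provides a developing system $\left(\delta,h\right)$ inducing the given $T_0$-structure on $T_1$, uniquely up to conjugation by $G$. Here $\map[\delta]{V_1}{T_0}$ is a holomorphic local biholomorphism from the universal cover $V_1$ of $T_1$, and $\map[h]{\Lambda_1}{T_0}$ is a group homomorphism with $\delta\left(z+\gamma\right)=h\left(\gamma\right)+\delta\left(z\right)$ (writing $T_0$ additively). Because $V_1$ is simply connected, $\delta$ lifts through the universal cover $\map[\pi]{V_0}{T_0}$ to a holomorphic local biholomorphism $\map[\tilde\delta]{V_1}{V_0}$. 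For each $\gamma\in\Lambda_1$, the continuous function $z\mapsto\tilde\delta\left(z+\gamma\right)-\tilde\delta\left(z\right)$ takes values in the discrete coset $\pi^{-1}\left(h\left(\gamma\right)\right)\subset V_0$ and is therefore constant. In particular $d\tilde\delta$ is $\Lambda_1$-periodic and descends to a holomorphic $\Hom{V_1}{V_0}$-valued function on the compact complex manifold $T_1$.

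The crucial step is now to invoke the maximum principle: applied componentwise after choosing bases of $V_0$ and $V_1$, it forces $d\tilde\delta$ to be constant, so $\tilde\delta\left(z\right)=Lz+b$ for a linear map $\map[L]{V_1}{V_0}$ and some $b\in V_0$. Since $\tilde\delta$ is a local biholomorphism, $L$ is a linear isomorphism. The conjugation freedom $\left(\delta,h\right)\mapsto\left(g+\delta,h\right)$ from Remark~\ref{remark:Goldman} absorbs the constant $b$; after this normalization $\tilde\delta\left(z\right)=Lz$ and $h\left(\gamma\right)=L\gamma+\Lambda_0$, which is precisely the developing system constructed in example~\ref{example:TorusTorus} from the linear isomorphism $L$, exhibiting $\Lambda_0$ as a sublattice of $\Lambda_1$ in the required sense.

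I expect the main obstacle to be this rigidity step, i.e.\ showing $\tilde\delta$ is affine. The essential input is compactness of $T_1$, which collapses the candidate holomorphic differentials to constants; without compactness of $T_1$ there would be many non-affine developing maps. The remaining bookkeeping — identifying the normalized developing system with the recipe of example~\ref{example:TorusTorus}, and verifying that the residual ambiguity in $L$ reduces to the automorphism freedom of $T_1$ — is routine once the Goldman correspondence is in hand.
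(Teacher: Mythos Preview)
Your argument is correct and matches the paper's proof essentially step for step: lift the developing map to $\tilde\delta\colon V_1\to V_0$, show $\tilde\delta(z+\gamma)-\tilde\delta(z)$ is constant in $z$, deduce that $d\tilde\delta$ descends to the compact torus $T_1$ and is therefore constant, and conclude $\tilde\delta$ is affine. The only cosmetic differences are that the paper normalizes $\delta(0)=0$ at the outset rather than conjugating away $b$ at the end, and argues constancy of $\tilde\delta(z+\gamma)-\tilde\delta(z)$ via vanishing derivatives rather than your (equally valid) discreteness-of-the-fiber observation.
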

\begin{proof}
Suppose that we have a $T_0$-structure on $T_1$, with
developing map
\[
\map[\delta]{V_1}{T_0}
\]
and holonomy morphism
\[
\map[h]{\Lambda_1}{T_0}.
\]
By translation, we can assume that $\delta(0)=0$.
Since $V$ is simply connected, $\delta$ lifts uniquely
to a map
\[
\map[\tilde{\delta}]{V_1}{V_0}
\]
so that $\tilde{\delta}(0)=0$. Define
\[
\mapto[\tilde{h}]{\lambda \in \Lambda_1}%
{\tilde{\delta}\left(\lambda\right) \in V_0}.
\]
The function
\[
\tilde{\delta}\left(z+\lambda\right)-\tilde{\delta}\left(z\right)
\]
equals $\tilde{h}\left(\lambda\right)$ at $z=0$, 
and has derivatives the same as
\[
\delta\left(z+\lambda\right)-\delta\left(z\right)
\]
i.e. vanishing, so is constant. Therefore 
$\tilde{\delta}$ is the developing map, 
and $\tilde{h}$ the holonomy morphism,
of a unique translation structure on $T_1$,
i.e.
\[
\tilde{\delta}\left(z+\lambda\right)-\tilde{\delta}\left(z\right)
=
\tilde{h}\left(\lambda\right).
\]
Therefore the exterior derivative is
\[
d\tilde{\delta}_{z+\lambda} = d\tilde{\delta}_z,
\]
a periodic map $V_1 \to V_1^* \otimes V_0$, so
constant. Therefore $\tilde{\delta}$ is affine,
and so a linear isomorphism, say $L=\tilde{\delta}$.
We can therefore assume that $V_0=V_1$ and $L$ is the identity map.
Clearly 
$h\left(\lambda\right)=\delta\left(\lambda\right)-\delta(0)=
\lambda+\Lambda_0 \in T_0$.
\end{proof}

\begin{lemma}\label{lemma:torusTorus}
Suppose that $E_0$ is an elliptic curve. 
The only compact complex curves which admit
$E_0/0$-structures are elliptic curves.
The $E_0/0$-structures on an elliptic curve
$E_1$ are parameterized
by the linear isomorphisms between the Lie 
algebras of translations of $E_0$ and $E_1$.
\end{lemma}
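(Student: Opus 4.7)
The plan is to reduce the lemma directly to the preceding lemma~\ref{lemma:torusTorusTorus} applied to the one-dimensional vector space $V_0=V_1=\C{}$, after first showing that the only compact curves that can support such a structure are elliptic.

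First I would show that any compact complex curve $C$ admitting an $E_0/0$-structure must be an elliptic curve. Pull back the translation-invariant holomorphic 1-form $dz$ on $E_0=\C{}/\Lambda_0$ along any chart $\map[f]{U}{E_0}$ of the $E_0/0$-structure. Since each transition map is a translation of $E_0$, which preserves $dz$, the local pullbacks agree on overlaps and patch to a global holomorphic 1-form $\alpha$ on $C$. Each chart is a local biholomorphism, so $\alpha$ vanishes nowhere. Hence the canonical bundle $K_C$ is holomorphically trivial, so $\deg K_C = 2g-2 = 0$ and $g=1$; that is, $C=E_1$ is an elliptic curve.

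Next, writing $E_0=\C{}/\Lambda_0$ and $E_1=\C{}/\Lambda_1$, I would invoke lemma~\ref{lemma:torusTorusTorus} with $V_0=V_1=\C{}$ and $T_j=E_j$. That lemma asserts that every $E_0$-structure on $E_1$ arises, up to isomorphism and after translation to normalize $\delta(0)=0$, from a linear isomorphism $\map[L]{\C{}}{\C{}}$ via the developing map $\tilde\delta=L$ and holonomy morphism $\mapto{\lambda}{L(\lambda)+\Lambda_0}$; note that no lattice inclusion is required, because the developing map only needs to be defined on the universal cover and $H=0$ is trivial. Thus the structures are in bijection with the linear isomorphisms $\map[L]{\C{}}{\C{}}$ (once we mod out by conjugacy, which here is just translation on $E_0$ and so has already been absorbed by the normalization $\delta(0)=0$).

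Finally I would identify this parameter space with the one stated. The Lie algebra of translations of $E_j=\C{}/\Lambda_j$ is canonically $\C{}$, so a linear isomorphism $L$ as above is precisely a linear isomorphism between the Lie algebras of translations of $E_1$ and $E_0$. The only step requiring any care is confirming that two such $L$'s yield isomorphic $E_0/0$-structures iff they agree; this follows from the uniqueness-up-to-conjugacy clause in remark~\ref{remark:Goldman}, since the only elements of $G=E_0$ act by translation, which we have already fixed by requiring $\tilde\delta(0)=0$. I do not anticipate a serious obstacle: the genus calculation and the reduction to lemma~\ref{lemma:torusTorusTorus} are both immediate.
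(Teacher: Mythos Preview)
Your proposal is correct and follows the paper's approach in its essential step: both reduce the classification of $E_0/0$-structures on $E_1$ to lemma~\ref{lemma:torusTorusTorus}.

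The one difference worth noting is in the first step. The paper argues that $E_0$ carries an invariant affine structure, so any curve $C$ with an $E_0/0$-structure inherits an affine structure, and then invokes the (previously cited) fact that compact curves with holomorphic affine structures are elliptic. Your route is more self-contained: you pull back the invariant $1$-form $dz$ to get a nowhere-vanishing holomorphic $1$-form on $C$, forcing $K_C$ to be trivial and hence $g=1$. Both are valid; yours avoids appealing to the classification of affine structures at the cost of a (trivial) degree computation, while the paper's phrasing keeps the argument parallel to the other cases treated nearby. Your aside that no lattice-inclusion condition is needed when invoking lemma~\ref{lemma:torusTorusTorus} is also correct: the proof of that lemma imposes no such constraint, and the developing map need only be a linear isomorphism.
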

\begin{proof}
Suppose that $C$ is a compact complex
curve equipped with a holomorphic $E_0$-structure.
Since $E_0$ has an
invariant affine structure, so does $C$,
and therefore $C$ is an elliptic curve.
By lemma~\vref{lemma:torusTorusTorus},
the $E_0/0$-structures on $C$ must be
as described.
\end{proof}

\begin{example}\label{example:Zn}
Take an elliptic curve $E_0=\C{}/\Lambda_0$.
Suppose that $\Lambda_0 \subset \C{}$
is invariant under the group $\Z{}_n$
of multiplication
by $n$-th roots of 1. (For example,
any $\Lambda_0$ is invariant under
$\Z{}_2$.)
Take $G=\Z{}_n \ltimes \C{}/\Lambda_0$,
and $H=\Z{}_n$. Any $G/H$-structure imposes
an affine structure, since $G$ preserves
the affine structure on $E_0$.
Therefore the only compact complex 
curves which admit $G/H$-structures are 
elliptic curves. 
Take a primitive $n$-th root of 1, say $\alpha$.
Write elements of $G$ as pairs
$(a,b)$, with $a=\alpha^k$,
acting on $z \in E_0$
by $\mapto{z}{az+b}$.
If $E_1=\C{}/\Lambda_1$
is an elliptic curve, for any complex
number $c \in \C{\times}$, we can take 
developing map
\[
\mapto[\delta]%
{z \in \C{}}%
{cz + \Lambda_0 \in E_0},
\]
and holonomy morphism
\[
\mapto[h]%
{\lambda \in \Lambda_1}%
{(1,c\lambda) \in G}.%
\]
It is easy to check
that the constant $c$ is
uniquely determined up to $c \cong \alpha^k c$,
under conjugation, for $k=0, 1, \dots, n-1$. 
We want to prove
that $\left(c \, dz\right)^n$ 
parameterizes the moduli
space.

The automorphism group of the 
structure is the group of automorphisms
$z \mapsto \alpha^k z + b$,
where $\alpha^k$
preserves the lattice $\Lambda$,
and $b \in \C{}$.
\end{example}

\begin{lemma}\label{lemma:Zn}
Take an elliptic curve $E_0=\C{}/\Lambda_0$,
where $\Lambda_0 \subset \C{}$
is a lattice invariant under the group $\Z{}_n$
of multiplication by $n$-th roots of 1.  
Take $G=\Z{}_n \ltimes \C{}/\Lambda_0$,
and $H=\Z{}_n$. Then up to conjugation,
every $G/H$-structure on any elliptic
curve is obtained as in example~\vref{example:Zn}.
In particular, the choice of element
$\left(c \, dz\right)^n$ parameterizes
the moduli space.
\end{lemma}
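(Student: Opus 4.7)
The strategy is to lift the developing map through the universal cover of $E_0$, apply lemma \vref{lemma:Astructures} with $A=\Z{}_n$, and rule out the exponential sub-case by a rank argument.

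First, since $G=\Z{}_n \ltimes \C{}/\Lambda_0$ preserves the flat affine structure on $E_0 = \C{}/\Lambda_0$ inherited from $\C{}$, any $G/H$-structure on $C$ induces a holomorphic affine structure, forcing $C = \C{}/\Lambda_1$ to be elliptic.

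Next, I would lift $\delta\colon \tilde{C}=\C{} \to E_0$ through the covering $\C{} \to E_0$ to $\tilde\delta\colon \C{} \to \C{}$. Writing $h(\gamma) = (a_\gamma, b_\gamma) \in G$ for $\gamma \in \Lambda_1$, the function $\tilde\delta(\gamma z) - a_\gamma \tilde\delta(z)$ is continuous on $\C{}$ and takes values in a fixed coset of $\Lambda_0$, hence is a constant $\tilde b_\gamma$ lifting $b_\gamma$; the assignment $\tilde h(\gamma) := (a_\gamma, \tilde b_\gamma)$ is then easily checked to be a group homomorphism. Thus $(\tilde\delta, \tilde h)$ is a developing system for a $(\Z{}_n \ltimes \C{})/\Z{}_n$-structure on $C$.

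By lemma \vref{lemma:Astructures} applied with $A = \Z{}_n$, after $G$-conjugation $\tilde\delta$ is either linear, $\tilde\delta(z) = cz$, or of grain form $\tilde\delta(z) = ke^{cz}$. In the grain case the holonomy $\tilde h(\gamma) = (e^{c\gamma}, 0)$ must lie in $\Z{}_n \ltimes \C{}$, forcing $e^{c\gamma} \in \Z{}_n$ for all $\gamma \in \Lambda_1$, i.e.\ $c\Lambda_1 \subset \frac{2\pi i}{n}\Z{}$. But $\Lambda_1$ has $\Z{}$-rank $2$ while $\frac{2\pi i}{n}\Z{}$ has rank $1$, contradicting $c \ne 0$. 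Hence $\tilde\delta(z) = cz$; the equivariance calculation then gives $\tilde h(\gamma) = (1, c\gamma)$, and projection to $G$ recovers the formulas of example \vref{example:Zn}. The residual conjugation by $(\alpha^k, 0) \in G$ identifies $c$ with $\alpha^k c$, so $(c\, dz)^{\otimes n} \in \Cohom{0}{\kappa^{\otimes n}}$ parameterizes the moduli space.

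The main subtlety lies in the lifting step --- verifying that $\tilde h$ is a bona fide homomorphism, not merely a cocycle --- but this follows from the uniqueness, up to a global additive constant in $\Lambda_0$, of the lift $\tilde\delta$. The remainder is a direct application of lemma \vref{lemma:Astructures} together with the elementary rank inequality.
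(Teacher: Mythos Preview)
Your argument is correct but takes a genuinely different route from the paper. The paper works directly with $\delta\colon\C{}\to E_0$: since each $a_\lambda\in\Z{}_n$, differentiating gives $\delta'(z+\lambda)=a_\lambda\,\delta'(z)$ and hence $\delta'(z)^n$ is $\Lambda_1$-periodic; being a holomorphic function on the compact curve $E_1$ it is constant, so $\delta'$ itself is a nonzero constant and $\delta$ is affine. The holonomy then drops out immediately. Your approach instead lifts through the cover $\C{}\to E_0$, manufactures a $(\Z{}_n\ltimes\C{})/\Z{}_n$-structure, invokes lemma~\ref{lemma:Astructures}, and kills the grain branch with a rank obstruction. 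Both reach the same endpoint; the paper's differentiation trick is shorter and self-contained, while your reduction has the virtue of making the connection to lemma~\ref{lemma:Astructures} explicit and showing exactly why the exponential developing maps available there cannot survive when $A$ is finite. One minor remark: your justification that $\tilde h$ is a homomorphism is really just the equation $\tilde\delta(z+\gamma_1+\gamma_2)$ computed two ways, not ``uniqueness of the lift'' as you phrase it; and your notation $\tilde\delta(\gamma z)$ should read $\tilde\delta(z+\gamma)$.
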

\begin{proof}
Suppose that
\[
\map[\delta]{\C{}}{E_0}
\]
is some developing map with holonomy
morphism
\[
\map[h]{\Lambda_1}{G},
\]
for a $G/H$-structure on an elliptic
curve $E_1=\C{}/\Lambda_1$.
So
\[
\delta\left(z+\lambda\right)
=
h\left(\lambda\right)\delta(z),
\]
for $z \in \C{}$ and $\lambda \in \Lambda_1$.
Write $h(\lambda)$ as
\[
z \mapsto a_{\lambda}z + b_{\lambda}.
\]
So
\[
\left(\delta'\left(z+\lambda\right)\right)^n
=
\delta'(z)^n,
\]
i.e. $\delta'(z)^n$ is an invariant holomorphic
function on $E_1$, and therefore constant.
Therefore $\delta'(z)$ is a nonzero constant, so
up to conjugacy
\[
\delta(z) = cz + \Lambda_0,
\]
for some constant $c \ne 0$. 
The relation
\[
\delta\left(z+\lambda\right)
=
h\left(\lambda\right)
\delta(z)
\]
then says that
\[
a_{\lambda}=1
\]
for all $\lambda \in \Lambda_1$,
and that 
\[
b_{\lambda}=c\lambda+\Lambda_0.
\]
\end{proof}

\begin{example}\label{example:Ctimes}
Consider $\C{\times}$-structures, i.e. $G/H$-structures 
where $G=\C{\times}$ and $H=\{1\}$.
Pick an elliptic curve $E=\C{}/\Lambda$.
The affine structure on $E$ has the developing
map 
\[
\mapto[\delta]{z \in \C{}}{e^{cz} \in \C{\times}},
\]
with holonomy morphism
\[
\mapto[h]{\lambda \in \Lambda}{e^{c\lambda} \in G},
\]
so is also a $\C{\times}$-structure. The automorphism
group is the group of translations.
\end{example}

\begin{lemma}\label{lemma:Cstar}
Up to conjugacy, the only $\C{\times}$-structures on compact
complex curves are those of example~\vref{example:Ctimes}.
\end{lemma}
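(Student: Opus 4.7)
The plan is to reduce the problem to a simple ODE on the universal cover by exploiting the Maurer--Cartan form on $\C{\times}$. First I would observe that the holomorphic 1-form $dz/z$ on $\C{\times}$ is invariant under multiplication, hence under the action of $\C{\times}$ on itself by left translation. If $\delta : \tilde{C} \to \C{\times}$ is the developing map and $h : \fundgp{C} \to \C{\times}$ the holonomy morphism of a $\C{\times}$-structure on a compact complex curve $C$, then $\delta^*\!\left(dz/z\right)$ is nowhere vanishing (since $\delta$ is a local biholomorphism) and invariant under deck transformations (since each $h(\gamma)$ acts on $\C{\times}$ as a multiplication, which preserves $dz/z$). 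It therefore descends to a nowhere-vanishing holomorphic 1-form on $C$, so the canonical bundle of $C$ is trivial, and $C$ must be an elliptic curve $\C{}/\Lambda$.

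Next I would identify $\delta$ explicitly. With $\tilde{C}=\C{}$, the logarithmic derivative $\delta'(z)/\delta(z)$ is a holomorphic function, and the equivariance $\delta(z+\lambda)=h(\lambda)\delta(z)$ for $\lambda\in\Lambda$ makes it $\Lambda$-periodic. It therefore descends to a holomorphic function on the compact curve $C$, so it is constant, say equal to $c\in\C{}$. Solving the resulting ODE gives $\delta(z)=k\,e^{cz}$ for some $k\ne 0$ and some $c\in\C{}$; the requirement that $\delta$ be a local biholomorphism forces $c\ne 0$. By remark~\ref{remark:Goldman} the developing map is determined up to the left action of $G=\C{\times}$ on itself, and translating by $k^{-1}$ normalizes $\delta(z)=e^{cz}$, after which $h(\lambda)=e^{c\lambda}$ is forced by the equivariance relation. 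This matches example~\vref{example:Ctimes} exactly.

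I expect no serious obstacle: the whole argument hinges on the observation that pulling back $dz/z$ produces a canonical global nowhere-vanishing 1-form on $C$, after which the classification reduces to recognizing $\Lambda$-periodic holomorphic functions on $\C{}$ as constants.
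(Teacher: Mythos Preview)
Your proof is correct and follows essentially the same route as the paper: both arguments hinge on the observation that the logarithmic derivative $\delta'/\delta$ is $\Lambda$-periodic, hence constant, forcing $\delta(z)=k\,e^{cz}$. The only minor variation is in the first step, where you trivialize the canonical bundle via the invariant form $dz/z$ to conclude $C$ is elliptic, whereas the paper instead notes that the $\C{\times}$-action preserves an affine structure on $\C{\times}$ and invokes the classification of curves admitting affine structures; your version is arguably more self-contained.
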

\begin{proof}
Suppose that $C$ is a compact complex curve with
a $\C{\times}$-structure.
Clearly $\C{\times}$ acts on $\C{\times}$ preserving an 
affine structure, so $C$ has an affine structure,
and so $C$ is an elliptic curve, say $C=\C{}/\Lambda$.
Pick a developing map
\[
\map[\delta]{\C{}}{\C{\times}}
\]
with holonomy morphism
\[
\map[h]{\Lambda}{\C{\times}}.
\]
Since $\C{}$ is simply connected,
and $\delta$ has no zeroes, we can
let
\[
D(z) = \int \frac{\delta'(z)}{\delta(z)} \, dz.
\]
Then
\[
e^{D(z+\lambda)}=h(\lambda)e^{D(z)},
\]
so that
\[
e^{D\left(z+\lambda\right)-D(z)}
\]
is independent of $z$, so 
\[
D'\left(z+\lambda\right)=D'(z)
\]
is a holomorphic function of $z$ on $\C{}/\Lambda$,
so constant, and therefore $D'(z)$ is constant,
say $D(z)=az+b$, so $\delta(z)=Ce^{az}$
for some constant $C \ne 0$.
By conjugation, we can arrange that $C=1$.
Then $h(\lambda)=e^{a \lambda}$.
\end{proof}

\begin{example}\label{example:CtimesCtimes}
Let $G = \Z{}_2 \ltimes \C{\times}$
and $H = \Z{}_2$, where $\left(\pm, a\right) \in G$ acts on 
$X=\C{\times}$ by 
\begin{align*}
(+,a)z&=az, \\
(-,a)z&=\frac{a}{z}.
\end{align*}
We have the same examples we had
in example~\vref{example:Ctimes}: $\delta(z)=e^{cz}$
for $c \ne 0$. The automorphism group of each such
structure is $\Z{}_2 \ltimes \C{}$,
with $1 \in \Z{}_2$ acting by $z \mapsto -z$
on $C=\C{}/\Lambda$ and by $z \mapsto 1/z$ 
on $\Proj{1}$.
\end{example}

\begin{lemma}\label{lemma:CstarCstar}
The only $\Z{}_2 \ltimes \C{\times}/\Z{}_2$-structures 
on compact
complex curves are those of
example~\vref{example:CtimesCtimes}.
\end{lemma}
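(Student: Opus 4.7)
The plan is to reduce to lemma~\vref{lemma:Cstar} via the short exact sequence
\[
1 \to \C{\times} \to G \to \Z{}_2 \to 1,
\]
in which $\C{\times}\subset G$ is the identity component and $H=\Z{}_2$ maps isomorphically onto the quotient. Given a $G/H$-structure on a compact complex curve $C$ with developing system $\left(\delta,h\right)$, compose $h$ with the projection $G\to\Z{}_2$ to obtain $\bar h\colon\fundgp{C}\to\Z{}_2$. Two cases arise according as $\bar h$ vanishes or not.

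If $\bar h$ is trivial then $h$ takes values in $\C{\times}\subset G$, all transitions between charts of the $G/H$-structure lie in $\C{\times}$, and the identification $G/H=\C{\times}$ turns $\left(\delta,h\right)$ into a developing system for a $\C{\times}$-structure in the sense of example~\vref{example:Ctimes}. Lemma~\vref{lemma:Cstar} then forces $C$ to be an elliptic curve with, up to conjugacy, $\delta(z)=e^{cz}$ for some $c\neq 0$, which is exactly example~\vref{example:CtimesCtimes}.

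If instead $\bar h$ is nontrivial, its kernel has index two in $\fundgp{C}$ and corresponds to a connected unramified double cover $p\colon\tilde C\to C$. The pulled-back $G/H$-structure on $\tilde C$ has holonomy inside $\C{\times}$, so the previous paragraph gives $\tilde C=\C{}/\Lambda$ and, up to conjugacy, developing map $\delta(z)=e^{cz}$ on the universal cover $\C{}$ of $\tilde C$. The deck involution of $p$ is realized on $\C{}$ by some element of $\fundgp{C}$ whose holonomy label sits outside $\C{\times}$, and hence equals some $(-,a)\in G$. The defining relation $e^{c\sigma(z)}=a\,e^{-cz}$ then forces the induced deck transformation on $\tilde C$ to be $\sigma(z)=-z+b\pmod\Lambda$ for some $b\in\C{}$.

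The decisive step, and the main obstacle, is to rule out this second case. Any involution of the form $z\mapsto -z+b$ on $\C{}/\Lambda$ has four fixed points, namely the four half-period solutions of $2z\equiv b\pmod\Lambda$, so it cannot be the deck transformation of an \'etale double cover. This contradiction eliminates the nontrivial case and leaves only the structures of example~\vref{example:CtimesCtimes}.
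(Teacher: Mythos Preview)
Your argument is correct and follows the same outline as the paper's: pass to a double cover on which the holonomy lies in $\C{\times}$, invoke lemma~\ref{lemma:Cstar} to pin down the developing map as $e^{cz}$, and then see that the full holonomy is forced into the identity component.

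The only substantive difference is in how the non-identity component is excluded. The paper first observes that $C$ itself is an elliptic curve (since a $2$--$1$ cover is), so every $\gamma\in\fundgp{C}=\Lambda$ acts on $\C{}$ by a translation $z\mapsto z+\lambda$; the developing relation $e^{c(z+\lambda)}=h(\lambda)\cdot e^{cz}$ then forces $h(\lambda)=(+,e^{c\lambda})$, because the alternative $h(\lambda)=(-,a)$ would require $e^{2cz+c\lambda}=a$ to be constant in $z$. Your route is instead to let the developing relation determine the lift of the deck involution, conclude it is $z\mapsto -z+b$, and kill that possibility via its fixed points. Both arguments are really the same equation $e^{c\,\tilde\sigma(z)}=a\,e^{-cz}$ read in two directions: the paper assumes $\tilde\sigma$ is a translation and finds no solution, while you solve for $\tilde\sigma$ and see that the solution is not free. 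The paper's version is a line shorter once one has noted $C$ is elliptic; yours has the virtue of not needing that Euler-characteristic remark, and of making explicit a step the paper leaves to the reader.
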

\begin{proof}
Suppose that a compact complex curve
$C$ had a $G/H$-structure.
Then at most a 2-1 covering space of $C$
would have a $\C{\times}/1$-structure.
So $C$ must be an elliptic curve, say
$C=\C{}/\Lambda$.
Suppose that the $(G,X)$-structure has developing
map 
\[
\map[\delta]{\C{}}{\C{\times}},
\]
and holonomy morphism
\[
\map[h]{\Lambda}{\C{\times} \sqcup \C{\times}}.
\]
Again, on a 2-1 covering space this 
$G/H$-structure becomes a $\C{\times}/1$-structure,
with the same developing map. Therefore up
to conjugacy the developing map must be
\[
\mapto[\delta]{z \in \C{}}{e^{cz} \in \C{\times}}.
\]
The holonomy morphism must be
\[
h(\lambda) = 
\frac{\delta(z+\lambda)}{\delta(z)}=e^{c\lambda}.
\]
Conjugation can then identify $c \cong -c$.
\end{proof}

\begin{theorem}
Every effective holomorphic locally homogeneous geometric
structure on a compact Riemann surface is
one of the examples%
~\ref{example:TranslationAndAffine}--\ref{example:Ctimes},
as described in table~\vref{table:OneD}.
\end{theorem}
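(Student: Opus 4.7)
The strategy is to enumerate the rows of Table~\ref{table:OneD}, and for each model $(G,H)$ listed there, invoke one of the lemmas established earlier in the section. The proof is therefore mainly a bookkeeping argument, with the real content having already been done.

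The first step is to argue that the list of models in the table is exhaustive. Since $X = G/H$ is a connected $1$-dimensional complex manifold on which the complex Lie group $G$ acts holomorphically, faithfully, and transitively, the underlying manifold $X$ is biholomorphic to one of $\Proj{1}$, $\C{}$, $\C{\times}$, or an elliptic curve $\C{}/\Lambda_0$. For each such $X$, the complex Lie groups $G \subset \Bihol{X}$ acting transitively are classical: on $\Proj{1}$ only $\PSL{2,\C{}}$; on $\C{}$ the groups $\C{}$, $A \ltimes \C{}$ for $A \subset \C{\times}$ a closed Lie subgroup (discrete or $\C{\times}$ itself); on $\C{\times}$ only $\C{\times}$ and $\Z{}_2 \ltimes \C{\times}$; on an elliptic curve, the curve itself semidirect with its group of rotational symmetries $\Z{}_n$, $n \in \{1,2,3,4,6\}$. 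In each case the stabilizer $H$ is determined.

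The second step is to restrict the possible underlying compact curves $C$ for each model. Each model on the list except the last three carries an invariant holomorphic affine structure (since $\C{}$, $\C{\times}$, and the elliptic curve all sit inside $\C{}$-homogeneous spaces with affine stabilizer, and the finite extensions preserve this). By Gauss--Bonnet (or equivalently, triviality of the tangent bundle induced on $C$), a compact curve admitting a holomorphic affine structure must have Euler characteristic zero, hence must be elliptic. The projective model $\PSL{2,\C{}}/B$ imposes no such restriction; a projective structure exists on every compact Riemann surface, as shown in example~\vref{example:ProjConnOnCurve}.

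The third step is to apply the specific classification lemmas row by row: translation structures by example~\ref{example:TranslationAndAffine}; the $A \ltimes \C{}/A$ row by lemma~\vref{lemma:Astructures}; the full affine $\C{\times} \ltimes \C{}/\C{\times}$ row by the affine structure classification in example~\ref{example:TranslationAndAffine}; the $\C{}/\Lambda_0$-on-elliptic row by lemma~\vref{lemma:torusTorus}; the finite-rotation semidirect rows by lemma~\vref{lemma:Zn}; the $\C{\times}$ row by lemma~\vref{lemma:Cstar}; the $\Z{}_2 \ltimes \C{\times}$ row by lemma~\vref{lemma:CstarCstar}; the projective row on elliptic curves by the explicit exponential developing maps above; the projective row on curves of genus $g \ge 2$ by example~\vref{example:ProjConnOnCurve}, which realizes the $\C{3g-3}$-dimensional space of quadratic differentials as the parameter space via the Schwarzian equation; and finally the projective structure on $\Proj{1}$ is unique because a local biholomorphism $\Proj{1} \to \Proj{1}$ is already global and fixed by $\PSL{2,\C{}}$ up to conjugacy.

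The main obstacle is really the first step: the clean enumeration of pairs $(G,H)$. Once this is in hand, the rest of the theorem is just a dispatch to the lemmas proved above. The only mild subtlety is matching the moduli descriptions in the last column of the table (identifying the parameter in each case with a section of an appropriate power of the canonical bundle $\kappa$), but this identification is built into the explicit developing maps written in each lemma.
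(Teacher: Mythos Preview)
Your proposal is correct and follows essentially the same architecture as the paper: first argue that the list of models $(G,H)$ in the table is complete, then dispatch row by row to the lemmas already proved. The one notable difference is that for the simply connected models ($G/H=\C{}$ and $G/H=\Proj{1}$) the paper simply invokes Gunning's classical result (with the exposition of Loray and Mar\'{\i}n P\'erez) rather than sketching the affine-structure/Gauss--Bonnet restriction and the Schwarzian parameterization directly; your more self-contained account is fine, but be aware that the paper treats those cases as already settled in the literature and reserves the new lemmas (\ref{lemma:torusTorus}, \ref{lemma:Zn}, \ref{lemma:Cstar}, \ref{lemma:CstarCstar}) precisely for the non-simply-connected models $\C{\times}$ and $\C{}/\Lambda_0$.
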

\begin{proof}
Gunning \cite{Gunning:1967} proves the result
assuming that $G/H$ is simply connected.
Loray and  Mar{\'{\i}}n P{\'e}rez
\cite{Loray/MarinPerez:2009}
provide a clear exposition of Gunning's proof.

We let the reader prove that the biholomorphism
group of $\C{\times}$ is $\C{\times} \sqcup \C{\times}$,
via $z \mapsto cz$ for $c \ne 0$ and via $z \mapsto c/z$
for $c \ne 0$. Therefore the only
complex Lie groups acting faithfully on $\C{\times}$
are $\C{\times}$ and $\C{\times} \sqcup \C{\times}$.

The complete classification of complex homogeneous
curves is then clearly as given in table~\vref{table:OneD},
and lemmas~\ref{lemma:torusTorus}, 
\ref{lemma:Zn},
\ref{lemma:Cstar}
and \ref{lemma:CstarCstar} finish
the classification of compact complex curves
with locally homogeneous structures. The proofs
of these lemmas also provide parameterizations
of the moduli spaces.
\end{proof}

%\subsection{Ineffective structures}
%
%\begin{example}\label{example:IneffectiveStructures}
%If $G/H$ is a homogeneous complex
%manifold, but $G$ doesn't act effectively,
%then we can let $K \subset H$ denote
%the kernel, and set $G'=G/K$
%and $H'=H/K$. 
%The $G/H$-structure has precisely
%the same set of charts as the induced
%$G'/H'$-structure, so each
%$G/H$-structure induces a $G'/H'$-structure,
%and vice versa.
%\end{example}
%
%\begin{example}
%By the Levi--Mal'tsev decomposition,
%a complex Lie group $G$ has a transitive
%action on $\Proj{1}$ if and only
%if it contains a complex subgroup isomorphic
%to either $\PSL{2,\C{}}$ or $\SL{2,\C{}}$.
%\end{example}

\subsection{Automorphism groups}

\begin{theorem}\label{theorem:homogeneity}
On a compact complex curve of genus
$0$ or $1$, every holomorphic locally
homogeneous structure 
is homogeneous, except for a 
$G/H$-structure with nonlinear
developing map associated to a 
grain; such a structure has finite automorphism
group. On a curve of 
higher genus, every holomorphic locally
homogeneous structure is a projective
structure with a finite automorphism
group. 
\end{theorem}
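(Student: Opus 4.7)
The plan is to proceed by going through the classification table case by case, using the automorphism calculations already recorded in the individual examples and lemmas that established the classification.

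First I would dispose of genus $0$: by the uniqueness result for effective Cartan geometries on $\Proj{1}$ already established, the only possibility is the standard projective structure with automorphism group $\PSL{2,\C{}}$, which acts transitively, so the structure is homogeneous. Next I would treat genus $1$, walking through each row of table~\ref{table:OneD}. For the translation, affine with linear developing map, $T_0$-structures, $\Z{}_n\ltimes\C{}/\Lambda_0$-structures, $\C{\times}$-structures, $\Z{}_2\ltimes\C{\times}$-structures, and the projective structure at $c=0$, the automorphism group described in the corresponding example always contains the full translation group of the elliptic curve, so the structure is homogeneous. The remaining genus $1$ cases are the affine, $G/H$, projective, and $\C{\times}$ structures with $c\neq 0$ coming from a nonlinear developing map; in each example the automorphism group was computed to be a translation subgroup constrained by a lattice condition (e.g. translations $z$ with $e^{cz}\in A$, or $e^{c\lambda}$ lying in a finite group), which is a discrete, hence finite, subgroup of the compact curve. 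This is exactly the clause about grains in the theorem — the grain condition is the common source of these discreteness constraints.

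For genus $\ge 2$ the plan is to argue that only projective structures can occur, and that their automorphism groups are finite. The first observation is that the models $\C{}$, $\C{\times}$, and $\C{}/\Lambda$ of the table, together with their various extensions by finite groups or by $\C{\times}$, all carry a $\PSL{2,\C{}}$-invariant projective structure obtained by including them into $\Proj{1}$; indeed each such model group embeds, as was noted in example~\ref{example:TranslationAndAffine} and example~\ref{example:CtimesCtimes}, into the affine group and hence into $\PSL{2,\C{}}$. Therefore any locally homogeneous structure with a non-projective model induces in particular an affine or translation structure on the curve, and these exist only on elliptic curves (this is used repeatedly in lemmas~\ref{lemma:torusTorus},~\ref{lemma:Zn},~\ref{lemma:Cstar},~\ref{lemma:CstarCstar}). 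Hence on a curve of genus $\ge 2$ only projective structures remain.

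Finally I would invoke that the biholomorphism group of a compact Riemann surface of genus $\ge 2$ is finite (Hurwitz's automorphisms theorem), so the automorphism group of any holomorphic structure on such a curve, being a subgroup of $\Bihol{C}$, is a fortiori finite.

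The step I expect to be least routine is verifying that the automorphism groups in the grain case are genuinely finite rather than continuous: this reduces to checking that inside a lattice $\Lambda$, the set $\{z\in\C{}/\Lambda : e^{cz}\in A\}$ (respectively the analogous conditions for the projective and $\C{\times}$ variants) is discrete. Since $A$ is countable and the map $z\mapsto e^{cz}$ is a nonconstant holomorphic map, its fibers are discrete, so discreteness in the quotient follows and compactness of $\C{}/\Lambda$ then gives finiteness. Once this point is verified, the rest is bookkeeping against the previously computed automorphism groups.
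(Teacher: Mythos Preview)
Your case-by-case plan would work, but you have misread the automorphism computations in the examples, and this leads you to the wrong list of inhomogeneous structures. You claim that the affine structures with $c\neq 0$, the projective structures with $c\neq 0$, and the $\C{\times}$-structures all have finite automorphism group. They do not. In each of those examples the paper records that the full translation group of the elliptic curve acts: for the affine structures with nonlinear developing map ``the automorphism group \dots\ is the group of translations''; for projective structures at $c\neq 0$ it is $\Z{}_2\ltimes\C{}$; for $\C{\times}$-structures it is again the translations. All of these are homogeneous. The \emph{only} genus~$1$ case with finite automorphism group is the one in example~\ref{example:G1G2G3}, namely $G=A\ltimes\C{}$ with $A\subset\C{\times}$ a countable group carrying the discrete topology, together with a nonlinear developing map built from a grain. (You have also double-listed the $\C{\times}$-structures in both your ``homogeneous'' and ``finite'' lists.)

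The underlying reason your bookkeeping went astray, and the reason the paper's argument avoids the case-by-case check entirely, is this: in every genus~$1$ case the holonomy morphism $h\colon\Lambda\to G$ is the restriction to $\Lambda$ of a holomorphic map $\C{}\to G$, either linear or $z\mapsto e^{cz}$. When the relevant factor of $G$ containing the image is positive-dimensional (as for $\C{\times}\ltimes\C{}$, $\PSL{2,\C{}}$, $\C{\times}$, $\Z{}_2\ltimes\C{\times}$), this map is a genuine Lie group morphism $\C{}\to G$, and lemma~\ref{lemma:Extension} promotes the translation action on $\tilde C=\C{}$ to automorphisms of the Cartan geometry. The one case where this fails is the grain case: there the target $A$ has the \emph{discrete} topology, so $z\mapsto e^{cz}$ cannot be a continuous morphism $\C{}\to A$ unless $c=0$. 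That is precisely why the grain structures are singled out. Once you replace your incorrect list with this single observation, the rest of your outline (genus~$0$ via uniqueness on $\Proj{1}$; genus~$\ge 2$ via the induced affine structure forcing ellipticity, hence only projective structures remain, and Hurwitz) matches the paper.
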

\begin{proof}
In each case of an elliptic curve,
the developing map was linear or
exponential, and the holonomy the
same linear or exponential restricted
to the lattice, so the holonomy morphism
extends to a morphism on the group of translations
of $\C{}$. This morphism has image in $G$,
except when the structure is constructed
from a grain.
For a projective line,
the only structure is the standard
projective structure. Any higher
genus curve has finite biholomorphism
group.
\end{proof}

\section{Biholomorphism groups of 
products of curves}

Our goal in this section is to classify
all of the complex Lie groups acting
holomorphically, faithfully and
transitively on complex surfaces of
the form $C \times \Proj{1}$, where
$C$ is a (not necessarily compact)
complex curve.

\begin{lemma}\label{lemma:P1xP1}
\[
\Bihol{\Proj{1} \times \Proj{1}}
=
\Z{}_2 \ltimes 
\left(
\PSL{2,\C{}} 
\times
\PSL{2,\C{}} 
\right)
\]
\end{lemma}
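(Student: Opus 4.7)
The plan is to reduce an arbitrary biholomorphism $\varphi$ of $\Proj{1} \times \Proj{1}$ to a product of biholomorphisms of $\Proj{1}$, possibly after composing with the swap $\sigma \colon (x,y) \mapsto (y,x)$, and then to read off the semidirect product structure. The crux is to recognize that the two rulings of $\Proj{1} \times \Proj{1}$ are intrinsically preserved up to interchange.

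First I would consider the action on $\operatorname{Pic}\left(\Proj{1} \times \Proj{1}\right) = \Z{} H_1 \oplus \Z{} H_2$, where $H_1, H_2$ are the classes of the two rulings, with intersection form $H_1^2 = H_2^2 = 0$ and $H_1 \cdot H_2 = 1$. Since $\varphi$ preserves effective classes and the intersection pairing, writing $\varphi^{*} H_i = a_i H_1 + b_i H_2$ with nonnegative integer entries, the conditions $2 a_i b_i = 0$ and $a_1 b_2 + b_1 a_2 = 1$ force $\varphi^{*}$ to act on $\{H_1, H_2\}$ either as the identity or as the swap. After pre-composing with $\sigma$ if necessary, I may assume $\varphi$ carries each $\pi_1$-fiber to a $\pi_1$-fiber, and similarly for $\pi_2$.

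Then, fixing any $y_0$, the map $\varphi_1(x) = \pi_1\left(\varphi(x, y_0)\right)$ is a holomorphic map $\Proj{1} \to \Proj{1}$ independent of $y_0$ by fiber-preservation, and is a biholomorphism because $\varphi$ is. Defining $\varphi_2$ symmetrically yields $\varphi(x, y) = \left(\varphi_1(x), \varphi_2(y)\right)$ with $\varphi_1, \varphi_2 \in \PSL{2, \C{}}$. Since $\sigma^2 = 1$ and conjugation by $\sigma$ interchanges the two $\PSL{2, \C{}}$ factors, the full automorphism group is $\Z{}_2 \ltimes \left(\PSL{2, \C{}} \times \PSL{2, \C{}}\right)$.

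I expect the main obstacle to be the first step, the intrinsic identification of the two rulings: without it nothing prevents an exotic biholomorphism from mixing the two factors. The Picard argument is clean but tacitly uses that every biholomorphism preserves the effective cone and intersection pairing; an alternative would be to classify holomorphic surjections $\Proj{1} \times \Proj{1} \to \Proj{1}$ with connected fibers (there are only the two projections up to automorphism of the target) and argue directly that a biholomorphism must permute them.
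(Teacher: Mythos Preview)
Your proposal is correct and follows essentially the same approach as the paper: both arguments use the intersection pairing on the two ruling classes (the paper writes $C_1 C_2 = a_1 b_2 + a_2 b_1$ for curves, you write the same formula on $\operatorname{Pic}$) to conclude that any biholomorphism permutes the two fibrations, and then reduce to a product of M\"obius transformations on the factors. The only difference is cosmetic---you phrase the first step via $\operatorname{Pic}$ and the effective cone, the paper via intersection numbers of curves directly---and your write-up of the splitting step is slightly more explicit than the paper's.
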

\begin{proof}
\[
G= \Z{}_2 \ltimes 
\left(
\PSL{2,\C{}} 
\times
\PSL{2,\C{}} 
\right)
\]
act on $S$ by having $\Z{}_2$ interchange
the factors and having the obvious
linear fractional transformations on
each factor.

Clearly $S$ 
has two obvious fibrations by rational
curves, $\Proj{1} \times *$ and $* \times \Proj{1}$.
To any compact 
complex curve $C$ in $S$ we associate
its intersection numbers $a$ and $b$ with any fiber of
each of these fibrations. If we have two
rational curves $C_1$ and $C_2$, with
intersection numbers with the fibrations
$a_1, b_1$ and $a_2, b_2$ respectively,
then their intersection
number is
\[
C_1 C_2 = a_1 b_2 + a_2 b_1,
\]
so vanishes if and only if both $C_1$ and $C_2$
are fibers of the same one of our two fibrations.
Therefore every biholomorphism of $S$
preserves the pair of fibrations. Every
biholomorphism therefore, after multiplication
by a suitable element of $G$,
preserves both fibrations, and acts trivially
on the base of each, so is the identity.
\end{proof}

\begin{lemma}\label{lemma:BiholEP1}
Suppose that $C$ is a compact
complex curve, and that $S=C \times \Proj{1}$
is a homogeneous complex surface.
Then $C$ is complex-homogeneous.
Either $C$ is isomorphic to $\Proj{1}$ or
$C$ is an elliptic
curve and the biholomorphism group
of $S$ is
\[
\Bihol{S}
=
\Bihol{C}
\times \PSL{2,\C{}}.
\]
\end{lemma}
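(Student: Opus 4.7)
The plan is to analyze biholomorphisms of $S=C\times\Proj{1}$ by exploiting the fact that when $C$ has positive genus the projection $S\to C$ is intrinsic, being characterized by the rational curves in $S$.

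If $C\cong\Proj{1}$, the statement reduces to lemma~\vref{lemma:P1xP1}, so assume the genus of $C$ is at least $1$. First I would observe that every holomorphic map $\Proj{1}\to C$ is constant (because $C$ has positive genus), so any nonconstant holomorphic map $\Proj{1}\to S$ factors through a single fiber of the projection $\pi:S\to C$. Consequently every rational curve in $S$ is a fiber of $\pi$, and so every biholomorphism $\Phi\in\Bihol{S}$ sends fibers of $\pi$ to fibers of $\pi$ and descends to a biholomorphism $\varphi\in\Bihol{C}$. This yields a homomorphism $\Bihol{S}\to\Bihol{C}$. Since $S$ is assumed homogeneous, the image acts transitively on $C$, so $C$ is complex-homogeneous. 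A compact complex-homogeneous curve of positive genus is necessarily an elliptic curve, ruling out genus at least $2$.

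Next I would identify the kernel $\AutP{S}$ of $\Bihol{S}\to\Bihol{C}$. An element of the kernel preserves each fiber of $\pi$, so it has the form $(c,p)\mapsto(c,\psi(c,p))$, where for each $c\in C$ the map $\psi(c,\cdot)$ is a biholomorphism of $\Proj{1}$. This gives a holomorphic map $c\mapsto\psi(c,\cdot)$ from the compact curve $C$ into the affine algebraic group $\PSL{2,\C{}}$; such a map must be constant, so $\AutP{S}=\PSL{2,\C{}}$ acting on the second factor. The obvious inclusion $\Bihol{C}\times\PSL{2,\C{}}\hookrightarrow\Bihol{S}$ splits the exact sequence
\[
1\to\PSL{2,\C{}}\to\Bihol{S}\to\Bihol{C}\to 1,
\]
and the two factors commute inside $\Bihol{S}$ because one acts only on the first coordinate and the other only on the second. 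Combined with the surjectivity of $\Bihol{S}\to\Bihol{C}$ (already implied by homogeneity of $S$), this identifies $\Bihol{S}$ with the direct product $\Bihol{C}\times\PSL{2,\C{}}$.

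The only subtle step is the claim that every rational curve in $S$ is a $\pi$-fiber; it is the key input that makes the fibration intrinsic and so forces every biholomorphism to respect the product structure. Everything else is straightforward: the identification of the kernel relies only on compactness of $C$ together with $\PSL{2,\C{}}$ being affine (hence not receiving nonconstant holomorphic maps from a compact complex manifold), and the splitting is witnessed by the tautological product action.
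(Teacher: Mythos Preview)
Your argument is correct and follows essentially the same route as the paper: both use that rational curves in $S$ must be fibers of $S\to C$ to make the fibration intrinsic, then use that a holomorphic map from a compact curve to an affine group is constant. The only cosmetic differences are that the paper passes to a $2$-$1$ cover to work with $\SL{2,\C{}}$ whereas you appeal directly to $\PSL{2,\C{}}$ being affine, and your parenthetical that surjectivity of $\Bihol{S}\to\Bihol{C}$ is ``already implied by homogeneity of $S$'' overstates things (homogeneity gives only transitivity of the image)---but this is harmless since your splitting via $\Bihol{C}\hookrightarrow\Bihol{S}$ already forces surjectivity.
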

\begin{proof}
Suppose that $S=C \times \Proj{1}$
and that $C$ is not isomorphic
to $\Proj{1}$.
So $C$ admits no nonconstant maps
$\Proj{1} \to C$. Every
rational curve in $S$ lies in a fiber
of the obvious fibration $S \to C$. 
So this fibration is biholomorphism
invariant. Since the biholomorphism
group of $S$ is transitive,
it acts transitively on the base $C$
of the fibration. Therefore $C$
is an elliptic curve. The group 
$G=\Bihol{C}
\times \PSL{2,\C{}}$ also acts 
transitively on the base. 
Any biholomorphism, after suitable
multiplication by an element of
$G$, becomes trivial on the base,
and so has the form of a 
holomorphic map
\[
C \to \PSL{2,\C{}}.
\]
On a 2-1 covering space $\hat{C} \to C$, this
lifts to a holomorphic map
\[
\hat{C} \to \SL{2,\C{}},
\]
which must be constant because $\hat{C}$
is compact and $\SL{2,\C{}}$ is affine.
So the biholomorphism
is an element of $\PSL{2,\C{}}$.
\end{proof}

\begin{example}
The biholomorphism group of
$\C{} \times \Proj{1}$ is infinite
dimensional, containing all maps of
the form
\[
\mapto{\left(z,w\right)}{\left(z,g(z)w\right)},
\]
where $\map[g]{\C{}}{\PSL{2,\C{}}}$
is any holomorphic map.
For example, we could take 
\[
g(z)
=
\begin{pmatrix}
e^{a(z)} & b(z) \\
0 & e^{-a(z)}
\end{pmatrix},
\]
for any two entire functions $a(z), b(z)$.
\end{example}

\begin{example}\label{example:ZZn}
There are some obvious finite dimensional
complex Lie groups of biholomorphisms
acting holomorphically, faithfully and transitively
on $\Proj{1} \times \C{}$.
One is
\[
\PSL{2,\C{}} \times \left(\C{\times} \ltimes \C{}\right),
\]
acting as $\Bihol{\Proj{1}} \times \Bihol{\C{}}$.
%Some others: for any integer $n > 0$
%and any complex number $\lambda \ne 0$,
%let $G$ be the set of all pairs
%$(g,h)$ where $g \in \PSL{2,\C{}}$ and
%$h$ is an affine transformation of $\C{}$ 
%of the form
%\[
%h(z) = \mu \lambda^k z + b,
%\]
%where $\mu^n=1, k \in \Z{}$ and $b \in C$.
%So 
%\[
%G = 
%\left(
%\Z{}_n \times \Z{}
%\right)
%\ltimes
%\left(
%\PSL{2,\C{}}
%\times
%\C{}
%\right).
%\]

More generally, if $A \subset \C{\times}$ is a countable
abelian group with the discrete topology then let
\[
G = \PSL{2,\C{}} \times \left(A \ltimes \C{}\right),
\]
acting on $\Proj{1} \times \C{}$ as
\[
(g,a,t)(z,w)=\left(gz,aw+t\right).
\]
\end{example}

Next we will write down a series of examples of 
connected complex Lie groups acting faithfully
and transitively on connected complex surfaces.
We will then explain why this series of examples,
together with our previous examples, is
complete. We will then make use of this list
to find all of the (not necessarily connected)
complex Lie groups acting holomorphically, faithfully and
transitively on surfaces of the form
$C \times \Proj{1}$.

\begin{example}\label{example:FirstLie}
$G=\SL{2,\C{}}$ has the obvious action
on $X=\C{2} \setminus 0$.
\end{example}

\begin{example}
$G=\PSL{2,\C{}}$ acts on 
$\Proj{1} \times \Proj{1} \setminus \text{diagonal}$ 
by the usual linear fractional action
on each $\Proj{1}$.
\end{example}

\begin{example}
$G=\GL{2,\C{}}$ has the obvious action on $X=\C{2} \setminus 0$.
\end{example}

\begin{example}
Let $D$ be an effective divisor on $\C{}$,
and let $p(z)$ be the monic polynomial
with divisor $D$. Let $V=V_D$ be the
set of entire functions $\map[f]{\C{}}{\C{}}$
for which 
\[
p\left(\pd{}{z}\right)f=0.
\]
Let $G=G_D=\C{} \ltimes V_D$ with the group operation
\[
\left(t_0,f_0(z)\right)\left(t_1,f_1(z)\right)
=
\left(t_0+t_1,f_0(z)+f_1\left(z-t_0\right)\right).
\] 
Let $G$ act on $X=\C{2}$ by
\[
\left(t,f\right)\left(z,w\right)
=
\left(z+t,w+f\left(z+t\right)\right).
\] 
\end{example}

\begin{example}
Let $D$ be an effective divisor on $\C{}$,
and let $p(z)$ be the monic polynomial
with divisor $D$. Let $V=V_D$ be the
set of entire functions $\map[f]{\C{}}{\C{}}$
for which 
\[
p\left(\pd{}{z}\right)f=0.
\]
Let $G=G'_D = \C{} \times \C{\times} \times V_D$
with group operation
\[
\left(t_0,\lambda_0,f_0(z)\right)
\left(t_1,\lambda_1,f_1(z)\right)
=
\left(t_0+t_1,\lambda_0 \lambda_1, 
f_0(z) + \lambda_0 \, f_1\left(z-t_0\right)\right)
\]
and action on $X=\C{2}$
\[
\left(t,\lambda,f\right)(z,w)
=
\left(z + t, \lambda w + f\left(z + t\right)\right).
\]
\end{example}

\begin{example}\label{example:On}
The surface $\OO{n}$ is the total space of the usual line
bundle $\OO{n} = \OO{1}^{\otimes n} \to \Proj{1}$,
whose fibers are choices of line in $\C{2}$ and homogeneous 
polynomial of degree $n$ on that line.
This surface is acted on by the group $\GL{2,\C{}}$ of linear substitutions of variables, and is also acted on by the group $\Sym{n}{\C{2}}^*$
by adding a globally defined homogeneous polynomial to the polynomial on any given line.
The subgroup  $\Z{}_n \subset \GL{2,\C{}}$ of scalings of variables by $n$-th roots of unity acts trivially. 
So the complex surface $X=\OO{n}$
is acted on by
\[
G
= 
\left(\GL{2,\C{}}/\Z{}_n\right)
\ltimes 
\Sym{n}{\C{2}}^*.
\]
\end{example}

\begin{example}
The surface $X=\OO{n}$ is also acted on 
by
\[
G
= 
\left(\SL{2,\C{}}/Z\right)
\ltimes 
\Sym{n}{\C{2}}^*,
\]
where $Z$ is $\pm 1$ if $n$ is even
and is $1$ is $n$ is odd.
\end{example}

\begin{example}\label{example:C2On}
Take any global section $s$
of $\OO{n} \to \Proj{1}$
which vanishes precisely at 
$\infty \in \Proj{1}$.
Then over $\C{} \subset \Proj{1}$,
we trivialize $\OO{n}$ by the embedding
\[
\mapto{\left(z,w\right) \in X=\C{2}}%
{w \, s(z) \to \OO{n}}.
\] 
The subgroup of $G_0 \subset \GL{2,\C{}}$ of 
matrices of the form
\[
\begin{pmatrix}
a & b \\
0 & c
\end{pmatrix}
\]
acts on this open subset of $\OO{n}$ by
\[
\mapto{\left(z,w\right)}%
{\left(\frac{a}{d}z+\frac{b}{d},\frac{w}{d^n}\right)}.
\]
Then
\[
G = G_0 \ltimes \Sym{2}{\C{n}}^*,
\]
acts on $X=\C{2}$.
\end{example}

\begin{example}
Pick any complex number $\alpha$.
In the last example, we can replace $G_0$ by
the group of matrices of the form
\[
\mu
\begin{pmatrix}
e^{\lambda\left(1-\frac{\alpha}{n}\right)} & b \\
0 & e^{-\lambda \alpha/n}
\end{pmatrix}
\]
where $\mu$ is any $n$-th root of 1 and
$\lambda$ is any complex number.
\end{example}

\begin{example}
Consider the set $G$ of elements $(g,p)$ of
\[
\left(\GL{2,\C{}}/\Z{}_n\right)
\ltimes 
\Sym{n}{\C{2}}^*
\]
where 
\[
g=
\begin{pmatrix}
1 & b \\
0 & e^{-\lambda}
\end{pmatrix}
\]
and 
\[
p\left(z_1,z_2\right)
=
\lambda z_1^n + z_2 \, r\left(z_1,z_2\right),
\]
where
\[
r \in \Sym{n-1}{\C{2}}^*.
\]
This $G$ acts on the subset
$X=\C{2} \subset \OO{n}$
which we constructed
in example~\vref{example:C2On}.
\end{example}

\begin{example}
The affine group
\[
\GL{2,\C{}} \ltimes \C{2}
\]
and its subgroup
\[
\SL{2,\C{}} \ltimes \C{2}
\]
both act faithfully and transitively
on $\C{2}$.
\end{example}

\begin{example}\label{example:LastLie}
The group $\PSL{3,\C{}}$ acts
faithfully and transitively on $\Proj{2}$.
\end{example}

\begin{theorem}[Lie \cite{Mostow:1950,Olver:1995}]%
\label{theorem:Lie}
Suppose that $S=G/H$ is a simply connected
complex-homogeneous
complex surface, with $G$ a connected
complex Lie group
acting holomorphically, faithfully and transitively on $S$. 
Then either (1)
$S=\Proj{1} \times \Proj{1}$
and 
$G=\PSL{2,\C{}} 
\times
\PSL{2,\C{}}$ or
(2) 
$S$ is one of the examples~\ref{example:FirstLie}
through \ref{example:LastLie}.
\end{theorem}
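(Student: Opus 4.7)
The plan is to follow Lie's classical classification of transitive Lie algebras of holomorphic vector fields on a complex surface, dichotomizing into the \emph{primitive} case (no $\LieG$-invariant $1$-dimensional distribution on $S$) and the \emph{imprimitive} case. Working at the Lie algebra level, fix a basepoint with isotropy $\LieH \subset \LieG$; the dichotomy is whether the isotropy representation of $\LieH$ on $\LieG/\LieH \cong \C{2}$ is irreducible or preserves a line.

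In the primitive case, irreducibility forces the image of the isotropy in $\GL{\LieG/\LieH} = \GL{2,\C{}}$ to contain $\SL{2,\C{}}$, so that $\LieSL{2,\C{}} \hookrightarrow \LieG$ and $\LieG/\LieH$ is the standard $\LieSL{2,\C{}}$-module. A short case analysis of the possible radicals of $\LieG$ and of the compatible center, using complete reducibility of finite-dimensional $\LieSL{2,\C{}}$-modules, leaves only the possibilities enumerated in Examples~\ref{example:FirstLie}--\ref{example:LastLie}: the linear actions of $\SL{2,\C{}}$ and $\GL{2,\C{}}$ on $\C{2}\setminus 0$, their affine extensions $\SL{2,\C{}} \ltimes \C{2}$ and $\GL{2,\C{}} \ltimes \C{2}$ on $\C{2}$, the projective action of $\PSL{3,\C{}}$ on $\Proj{2}$, the diagonal action of $\PSL{2,\C{}}$ on $\Proj{1} \times \Proj{1}$ minus its diagonal, and the product action of $\PSL{2,\C{}} \times \PSL{2,\C{}}$ on $\Proj{1}\times\Proj{1}$ (case~(1)).

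In the imprimitive case, integrate the $\LieG$-invariant line distribution to a $G$-invariant holomorphic foliation on $S$. Transitivity forces this foliation to be the fibration of a holomorphic submersion $\pi\colon S \to B$. The long exact homotopy sequence together with simple connectedness of $S$ forces $B$ to be simply connected, and by the $1$-dimensional classification in Table~\ref{table:OneD}, $B$ is either $\C{}$ or $\Proj{1}$. Let $N$ be the kernel of the induced $G$-action on $B$; then $N$ acts transitively on each fiber $F$, which is again a complex-homogeneous curve. I would then describe $N$ as a finite-dimensional $G/N$-invariant subspace of the global holomorphic sections of a vector bundle whose fiberwise model is the Lie algebra of the transitive action on $F$. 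When $B = \Proj{1}$ and $F = \C{}$, the underlying line bundle must be $\OO{n}$ for some $n \geq 0$, and $N = \Sym{n}{\C{2}}^*$ as in Example~\ref{example:On}; when $B = \C{}$, the bundle trivializes and translation invariance forces $N$ to sit inside $\mathcal{O}(\C{})$ as the solution space of a constant-coefficient linear ODE, recovering the families $G_D$ and $G'_D$. The remaining cases for $F$ (either $\Proj{1}$ itself or the multiplicative group $\C{\times}$) are handled by the same reduction.

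The hard part will be the finite-dimensionality bookkeeping in the imprimitive case. It is easy to write down infinite-dimensional transitive actions, such as the group of all holomorphic maps $\C{} \to \PSL{2,\C{}}$ acting fiberwise on $\C{}\times\Proj{1}$, and the real content is showing that any finite-dimensional translation-invariant subspace of $\mathcal{O}(\C{})$ is the kernel of a polynomial in $\partial/\partial z$. Together with a parallel degree bound over $\Proj{1}$ coming from finite-dimensionality of $H^0(\Proj{1},\OO{n})$, this cuts the bundle-of-sections analysis down to the explicit list of examples in the theorem.
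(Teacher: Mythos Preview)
Your approach re-derives Lie's classification from scratch via the primitive/imprimitive dichotomy, which is classically how the argument is organized.  The paper's proof is entirely different and much shorter: it treats Lie's classification of pairs $(\LieG,\LieH)$ with $\dim_{\C{}}\LieG/\LieH=2$ as a finished result from the literature (the citations to Mostow and Olver), and the only actual work is (i) invoking Mostow's theorem that in codimension~$2$ the connected subgroup $H$ of the simply connected $G$ with Lie algebra $\LieH$ is automatically closed, so $G/H$ is a bona fide simply connected surface, and (ii) observing that Examples~\ref{example:FirstLie}--\ref{example:LastLie} together with $\Proj{1}\times\Proj{1}$ exhibit one simply connected model for every pair on Lie's list, hence exhaust it.  Your route would be far more self-contained but would take many pages to execute carefully (the imprimitive bookkeeping you flag is genuinely long); the paper's route is two sentences of citation.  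One point your sketch glosses over, and which the paper isolates as the key non-obvious step, is precisely the closedness of $H$: you pass freely between Lie algebra pairs $(\LieG,\LieH)$ and homogeneous surfaces $G/H$, but without Mostow's result there is no a priori reason the connected subgroup with Lie algebra $\LieH$ is closed in the simply connected $G$.
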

\begin{proof}
Lie actually only identifies the possible
Lie algebras $\LieG$ and stabilizer subalgebras
$\LieH$. But then each such pair $\left(\LieG,\LieH\right)$
acts by a complete action (i.e. extending to a Lie group
action) on at most one simply connected
surface $S$, i.e. $S=G/H$ where $G$ is the 
connected and simply connected Lie group with Lie
algebra $\LieG$, and $H \subset G$ the connected
subgroup with Lie algebra $\LieH$. Mostow
\cite{Mostow:1950} proves that $H$ must actually
be a closed subgroup as long as $\LieH \subset \LieG$
has complex codimension 2. Since we have written
one simply connected example surface for
each of Lie's Lie algebra pairs, our list
must be exhaustive. 
\end{proof}

This classification of Lie
will not suffice; we need to know all
of the complex Lie groups $G$ (not necessarily
connected) acting 
holomorphically, faithfully and transitively
on connected 
complex surfaces of the form $X=C \times \Proj{1}$.

\begin{proposition}\label{proposition:BiholSplit}
Suppose that $S=C \times \Proj{1}$
is a connected complex surface, and that $G$
is a complex Lie group acting transitively,
faithfully and holomorphically on $S$.
Then either
\begin{enumerate}
\item from lemma~\vref{lemma:P1xP1}:
\(
S=\Proj{1} \times \Proj{1}
\)
and
\[
G = \Z{}_2 \ltimes 
\left(
\PSL{2,\C{}}
\times
\PSL{2,\C{}}
\right),
\]
or
\item
\(
G = G_0 \times \Bihol{\Proj{1}},
\)
where $G_0 \subset \Bihol{C}$
is a complex Lie group acting
holomorphically and transitively on $C=G_0/H_0$,
and thus $G_0/H_0$ is one of the examples
in table~\vref{table:OneD}.
\end{enumerate}
\end{proposition}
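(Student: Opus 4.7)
The plan is to split into cases based on whether $C \cong \Proj{1}$. If $C \cong \Proj{1}$, then $S = \Proj{1} \times \Proj{1}$ and lemma~\ref{lemma:P1xP1} identifies $\Bihol{S}$ with $\Z{}_2 \ltimes \left(\PSL{2,\C{}} \times \PSL{2,\C{}}\right)$. I would then enumerate transitive complex Lie subgroups: the identity component must surject onto each $\PSL{2,\C{}}$ factor, because the only transitive subgroup of $\PSL{2,\C{}}$ acting on $\Proj{1}$ is $\PSL{2,\C{}}$ itself (every proper closed subgroup lies in a Borel and fixes a point). Goursat's lemma combined with simplicity of $\PSL{2,\C{}}$ forces the identity component to be the full product, since graph subgroups are $3$-dimensional and only act transitively on the complement of the diagonal in $\Proj{1} \times \Proj{1}$. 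So $G$ is either $\PSL{2,\C{}} \times \PSL{2,\C{}}$ (case (2) with $G_0 = \PSL{2,\C{}}$) or its $\Z{}_2$-extension (case (1)).

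For $C \ncong \Proj{1}$, I would first note that $C$ admits no nonconstant holomorphic map from $\Proj{1}$, so every rational curve in $S$ lies in a fiber of the projection $\pi_1 \colon S \to C$; this fibration is therefore $G$-invariant, yielding a Lie morphism $\rho \colon G \to \Bihol{C}$ with image $G_0$ acting transitively on $C$. For each $g \in G$, the fiberwise action defines a holomorphic map $\phi_g \colon C \to \PSL{2,\C{}}$; lifting to a $2$-fold cover valued in the affine variety $\SL{2,\C{}}$ and using compactness of $C$ shows $\phi_g$ is constant. So $g \mapsto \left(\rho(g), \phi_g\right)$ embeds $G$ as a closed complex Lie subgroup of $G_0 \times \PSL{2,\C{}}$.

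Next I would show both projections of $G$ are surjective: the first by construction, and the second because the stabilizer in $G$ of any $c_0 \in C$ must act transitively on the fiber $\{c_0\} \times \Proj{1}$, and the only transitive subgroup of $\PSL{2,\C{}}$ acting on $\Proj{1}$ is $\PSL{2,\C{}}$. Let $N$ be the kernel of the first projection, a closed normal subgroup of $G$ contained in $\{e\} \times \PSL{2,\C{}}$. Surjectivity of $G \to \PSL{2,\C{}}$ makes the $G$-conjugation action on $N$ factor through all of $\PSL{2,\C{}}$, so $N$ is normal in $\PSL{2,\C{}}$; by simplicity $N$ is either trivial or all of $\PSL{2,\C{}}$, the latter case immediately yielding $G = G_0 \times \PSL{2,\C{}}$.

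The main obstacle is ruling out $N = \{e\}$. In that case the first projection becomes an isomorphism $G \cong G_0$, so composing with the second projection yields a surjective morphism $G_0 \twoheadrightarrow \PSL{2,\C{}}$ and forces $\dim G_0 \geq 3$. But inspection of table~\ref{table:OneD} reveals that every transitive $G_0 \subset \Bihol{C}$ with $C \ncong \Proj{1}$ compact has $\dim G_0 \leq 2$ (translation groups, discrete extensions of translation groups, or the affine-type groups $A \ltimes \C{}$ and $\C{\times} \ltimes \C{}$), a contradiction completing the proof.
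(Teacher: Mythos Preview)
Your argument has a genuine gap: the proposition does not assume $C$ is compact. The section's stated goal is to treat ``$C \times \Proj{1}$, where $C$ is a (not necessarily compact) complex curve,'' and the conclusion refers to table~\ref{table:OneD}, whose homogeneous curves include $\C{}$ and $\C{\times}$. Your key step---that each $\phi_g \colon C \to \PSL{2,\C{}}$ is constant because a $2$-fold cover lands in the affine variety $\SL{2,\C{}}$ and $C$ is compact---fails for $C=\C{}$ and $C=\C{\times}$. Indeed the paper explicitly notes that $\Bihol{\C{}\times\Proj{1}}$ is infinite dimensional, containing all maps $(z,w)\mapsto(z,g(z)w)$ for arbitrary holomorphic $g\colon\C{}\to\PSL{2,\C{}}$; so $G$ need not embed in $G_0\times\PSL{2,\C{}}$ by your recipe, and the Goursat-type reduction never gets off the ground.

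The paper handles these non-compact cases by a completely different mechanism: it invokes Lie's classification (theorem~\ref{theorem:Lie}) of connected complex Lie groups acting faithfully and transitively on simply connected complex surfaces. Running through Lie's list shows that when the surface is $\C{}\times\Proj{1}$ the identity component $G^0$ already sits inside $\Bihol{\C{}}^0\times\Bihol{\Proj{1}}$; then any $g\in G$ must normalize the Lie algebra $\LieG\subset(\C{}\ltimes\C{})\oplus\LieSL{2,\C{}}$, and one checks directly that this forces $g\in(\C{\times}\ltimes\C{})\times\PSL{2,\C{}}$. The case $C=\C{\times}$ is dispatched by the same computation. For compact $C$ your approach is essentially correct and coincides with lemma~\ref{lemma:BiholEP1}, but you have missed precisely the cases that require the heavier input from Lie's classification.
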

\begin{proof}
We need to argue that, besides the one
counterexample in the statement of this
proposition, all other complex Lie groups
$G$ acting holomorphically, faithfully and
transitively on a surface of the form
$S=C \times \Proj{1}$ must be subgroups
of $\Bihol{C} \times \Bihol{\Proj{1}}$.
By lemma~\vref{lemma:P1xP1},
this is true if $C=\Proj{1}$.
By lemma~\vref{lemma:BiholEP1},
it is true if $C$ is a compact
complex curve.

Suppose that $C=\C{}$.
By theorem~\vref{theorem:Lie}, looking
over the examples one-by-one,
we can see that the identity component
of $G$ is
\[
G^0 \subset \Bihol{C}^0 \times \Bihol{\Proj{1}}.
\]
Suppose that $g \in G$. Then $g$ must
preserve the Lie algebra $\LieG$ of $G$,
i.e. of $G^0$:
\[
\LieG \subset
\left(\C{} \ltimes \C{}\right) 
\oplus
\LieSL{2,\C{}}.
\]
A biholomorphism $g$ of $\C{} \times \Proj{1}$
preserves this Lie algebra if and only
if it lives in
\[
\left(\C{\times} \ltimes \C{}\right)
\times \PSL{2,\C{}},
\]
as required.

From Lie's classification, the only
other complex surface $S$ of the form
$C \times \Proj{1}$ which is complex-homogeneous
has $C=\C{\times}$. The
same computation shows that
preserving the same Lie algebra
requires once again a
biholomorphism in $\Bihol{C} \times \Bihol{\Proj{1}}$.
\end{proof}

\end{unabridged}

\section{Lifting from curves to surfaces}

\begin{unabridged}

\begin{example}
It is well known \cite{Biswas/McKay:2010}
that there is a unique holomorphic Cartan
geometry on $S=\Proj{2}$: it is the flat
Cartan geometry, i.e. $G/H$-structure, with
$G=\PSL{3,\C{}}$ and $G/H=\Proj{2}$.
\end{example}

\begin{example}
The surface $X=\Proj{1} \times \Proj{1}$
has biholomorphism group
\[
G = \Z{}_2 \ltimes
\left(\PSL{2,\C{}} \times \PSL{2,\C{}}\right).
\]
On $X$, there are precisely two holomorphic
foliations, the obvious horizontal
and vertical ones. The pair of them
is preserved by $G$, but neither one
is preserved individually.
This fact characterizes $X$ among
homogeneous surfaces.
\end{example}

\end{unabridged}

\begin{definition}
Suppose that $E \to M'$ is a holomorphic
Cartan geometry modelled on a complex
homogeneous space $G/H'$, with
Cartan connection $\omega$.
Suppose that
$H \subset H'$ is a closed complex Lie subgroup.
Let $M=E/H$. Then $E \to M$ is a holomorphic
Cartan geometry modelled on $G/H$, with
Cartan connection $\omega$. 
We say that the Cartan geometry on $M$ is the 
\emph{lift} 
of the Cartan geometry on $M'$.
\end{definition}

\begin{definition}\label{definition:lifting}
Suppose that $G$ is a complex Lie group and
that $H \subset H' \subset G$ are 
closed complex subgroups. Suppose that
$\dim G/H = 2$  and $\dim G/H' = 1$.
Suppose that $C$ is a complex curve.
Let $\pi = \fundgp{C}$.
Suppose that we have a $G/H'$-structure
on $C$, with developing map 
\map[\delta_C]{\tilde{C}}{G/H'},
and holonomy morphism
\map[h_C]{\pi}{G}.
Take the bundle $G/H \to G/H'$ and let
\[
\tilde{S} = \delta_C^*G/H,
\]
so that we have the diagram
\[
\xymatrix{
\tilde{S} \ar[d] \ar[r] & G/H \ar[d] \\
\tilde{C} \ar[d] \ar[r] & G/H' \\
C. \\
}
\]
So $\tilde{S}$ is a $\pi$-invariant
surface inside $\tilde{C} \times G/H$.
We then let $S=\pi \backslash \tilde{S}$,
so that $S$ is a complex surface, and 
\map{S}{C}
is a holomorphic fiber bundle, with fibers
isomorphic to $H'/H$.
The surface $\tilde{S}$ is a covering space
of $S$, and $S$ has a $G/H$-structure
with developing map
\map[\delta_S]{\tilde{S}}{G/H}
as above, and holonomy morphism
\map[h_S]{\fundgp{S}}{G}
given by the composition
\[
\fundgp{S} \to \pi \to G.
\]
We will say that this $G/H$-structure on $S$ is
the \emph{lift} of the $G/H'$-structure on $C$.
(This is a special case of a lift
of a Cartan geometry.)
\end{definition}

\begin{example}\label{example:liftCurve}
We define a lift of any structure
on a curve to a structure on any
flat $\Proj{1}$-bundle over that curve.
Suppose that $C$ is a compact complex
curve, and that $G_0/H_0$
is a complex-homogeneous curve. 
Let $\pi=\fundgp{C}$.
Take a $G_0/H_0$-structure on $C$,
say with developing map 
\[
\map[\delta_C]{\tilde{C}}{G_0/H_0}
\]
and holonomy morphism
\[
\map[h_C]{\pi}{G_0}.
\]
Take any group morphism
\[
\map[\rho]{\pi}{\PSL{2,\C{}}}.
\]
Let $\pi$ act
on $\tilde{C} \times \Proj{1}$
by
\[
\gamma \left(z,w\right)=
\left(\gamma \, z, \rho\left(\gamma\right)w\right).
\]
Define a compact complex surface $S$ by
\[
S = \tilde{C} \times_{\pi} \Proj{1}.
\]
Let $B \subset \PSL{2,\C{}}$
be the stabilizer of a point of $\Proj{1}$.
Let $G=G_0 \times \PSL{2,\C{}}$,
and $H=H_0 \times B$.
On $S$, define a $G/H$-structure,
by taking as developing map
\[
\map[\delta_S]{\tilde{S}=\tilde{C} \times \Proj{1}}%
{G/H=\left(G_0/H_0\right) \times \Proj{1}},
\]
the map
\[
\delta_S(z,w)=\left(\delta_C(z),w\right),
\]
and as holonomy morphism
\[
\map[h_S]{\fundgp{S}=\pi}{G=G_0 \times \PSL{2,\C{}}}
\]
the map
\[
h_S\left(\gamma\right)=\left(h_C\left(\gamma\right),
\rho\left(\gamma\right)\right).
\]
\end{example}

\begin{theorem}\label{theorem:RationalCurveClassification}
Suppose that $G/H$ is a connected
complex-homogeneous surface.
Suppose that $S$ is a compact complex surface
containing a rational curve
and that $S$ has an effective 
holomorphic Cartan geometry 
modelled on $G/H$.
Then the Cartan geometry is flat, a $G/H$-structure.
Up to isomorphism, either
\begin{enumerate}
\item
$S=\Proj{2}$ with the standard projective structure or
\item
$S=\Proj{1} \times \Proj{1}$ with the standard
$G/H$-structure where 
\[
G = \Z{}_2 \ltimes
\left(\PSL{2,\C{}} \times \PSL{2,\C{}}\right).
\]
or
\item
the $G/H$-structure on $S$, after perhaps
quotienting out the kernel of $G/H$, is constructed
as in example~\vref{example:liftCurve}.
The moduli space is then the product of
the moduli
space of $G_0/H_0$-structures on $C$
with the representation variety
\[
\Hom{\fundgp{C}}{\PSL{2,\C{}}}/\PSL{2,\C{}}.
\]
\end{enumerate}
%
%Then
%$S = C \times \Proj{1}$
%where $C$ is a compact complex curve
%with a unique one of the example structures
%given in table~\vref{table:OneD},
%say a $G_0/H_0$-structure.
%The charts of the $G/H$-structure
%on $S$ are precisely the products
%of charts of the structure on $C$
%and the usual charts of the standard
%projective structure on $\Proj{1}$.
%After perhaps quotienting out
%the kernel of $G/H$,
%\begin{align*}
%G &= G_0 \times \PSL{2,\C{}}, \\
%H &= H_0 \times \left\{
%\begin{bmatrix}
%a & b \\
%0 & a^{-1}
%\end{bmatrix}
%\right\}.
%\end{align*}
%The developing map and
%holonomy morphism of the $G/H$-structure
%on $S$ are given from those of $C$ 
%by 
%\[
%\map[\delta_S]%
%{(z,w) \in \tilde{C} \times \Proj{1}}%
%{\left(\delta_C(z), w\right) \in G_0/H_0 \times \Proj{1}}
%\]
%and
%\[
%\map[h_S=h_C]%
%{\fundgp{S}=\fundgp{C}}{G}.
%\]
\end{theorem}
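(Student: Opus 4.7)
The plan has three phases: prove flatness, enumerate the admissible models $G/H$, and reconstruct the $G/H$-structure on $S$ in each case. For flatness, the curvature $K$ of the Cartan geometry is a holomorphic section of $W = E \times_H (\LieG \otimes \Lm{2}{\LieG/\LieH}^*)$. Pulled back to a rational curve $R \subset S$ and split by Grothendieck's theorem, the relevant summands of $W|_R$ have negative degree (this is the key step of Biswas--McKay \cite{Biswas/McKay:2010}), so $K|_R \equiv 0$; since $S$ is uniruled, a covering family of rational curves forces $K \equiv 0$ on all of $S$.

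Given flatness, the geometry yields a developing pair $(\delta, h)$ with $\delta \colon \tilde{S} \to G/H$ a local biholomorphism. After quotienting by the kernel of $(G,H)$ we may assume $G$ acts faithfully on $G/H$, so by Theorem~\ref{theorem:Lie} the model $G/H$ is one of the surfaces listed there. Any rational curve $R \subset S$ lifts to $\tilde{S}$ and develops to a nonconstant image of $\Proj{1}$ inside $G/H$; this observation excludes the affine-type models in Lie's list ($\C{2}$, $\C{2}\setminus 0$, and each $G_D, G'_D$-surface), leaving $\Proj{2}$, $\Proj{1}\times\Proj{1}$, and the $\Proj{1}$-fibred surfaces.

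For the reconstruction, if $G/H = \Proj{2}$ then $S$ carries a classical holomorphic projective structure; a rational curve in $S$ develops to a line, and a Kobayashi--Ochiai-style argument promotes $\delta$ to a biholomorphism $\tilde{S} \to \Proj{2}$, yielding case~(1). If $G/H = \Proj{1}\times\Proj{1}$ with the $\Z{}_2$-extended group of Lemma~\ref{lemma:P1xP1}, the pair of transverse rulings pulls back to a $\Z{}_2$-invariant pair of foliations on $S$ whose leaves develop isomorphically to the rulings, forcing $\tilde{S} = \Proj{1}\times\Proj{1}$ and giving case~(2). In every remaining model, $G/H$ carries an equivariant $\Proj{1}$-fibration, so $S$ inherits a $\Proj{1}$-fibration $S\to C$; Proposition~\ref{proposition:BiholSplit} then forces $G$ to split as $G_0\times\PSL{2,\C{}}$ with $G/H = (G_0/H_0)\times\Proj{1}$, and the factorizations of $\delta$ and $h$ produce precisely the $G_0/H_0$-structure on $C$ together with the representation $\fundgp{C}\to\PSL{2,\C{}}$ of example~\ref{example:liftCurve} --- the case~(3) construction. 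Independence of the two data yields the product form of the moduli space.

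The principal obstacle, I expect, is the third case: one must rule out models such as $\OO{n}$ with nontrivial kernel in which $G$ does not split on the nose, and verify that after quotienting by the kernel the structure on $S$ indeed lifts from a curve base in the precise sense of example~\ref{example:liftCurve}. Proposition~\ref{proposition:BiholSplit} is the key tool, but transporting an abstract (possibly twisted) $\Proj{1}$-fibration on $S$ to the explicit quotient $\tilde{C} \times_{\fundgp{C}} \Proj{1}$ of example~\ref{example:liftCurve} may demand an additional monodromy argument to unwind the twist.
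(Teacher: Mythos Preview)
Your approach diverges from the paper's in a significant way. The paper does not prove flatness first and then classify models via Lie's theorem. Instead it applies the full structural theorem of Biswas--McKay \cite{Biswas/McKay:2010} as a black box: that theorem asserts directly that any compact complex manifold bearing a holomorphic Cartan geometry and containing a rational curve is a holomorphic fiber bundle $S \to C$ over a lower-dimensional compact base, with rational homogeneous fibers $H'/H$ for some intermediate closed subgroup $H \subset H' \subset G$, and that the Cartan geometry on $S$ is the lift of a Cartan geometry on $C$ modelled on $G/H'$. Flatness then falls out because every Cartan geometry on a curve is flat and lifting preserves flatness. The case split is simply $\dim C = 0$ (whence $S = G/H$ is itself rational homogeneous, giving $\Proj{2}$ or $\Proj{1}\times\Proj{1}$) versus $\dim C = 1$ (whence $H'/H = \Proj{1}$, so $G/H$ carries a $G$-invariant $\Proj{1}$-ruling, and Huckleberry's classification together with Proposition~\ref{proposition:BiholSplit} forces the product splitting $G = G_0 \times \PSL{2,\C{}}$).

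This route sidesteps every difficulty you flag. The $\OO{n}$ models never enter: they admit no $G$-invariant $\Proj{1}$-fibration, so the $\dim C = 1$ branch of Biswas--McKay excludes them immediately, and they are not compact, so the $\dim C = 0$ branch excludes them too. No Kobayashi--Ochiai argument is needed to identify $\tilde{S}$ with $\Proj{2}$ or $\Proj{1}\times\Proj{1}$, since in the $\dim C = 0$ case the structural theorem already says $S = G/H$ on the nose. And the ``monodromy argument to unwind the twist'' you anticipate is exactly the lifting statement in Biswas--McKay.

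Your own argument has a genuine gap at the flatness step: the hypothesis is only that $S$ \emph{contains} a rational curve, not that $S$ is uniruled. Vanishing of $K$ along a single rational curve does not propagate to all of $S$ unless you already have a covering family, and producing that family from one curve plus a Cartan connection is essentially what Biswas--McKay does. Citing their paper only for the degree computation on $W|_R$ underuses it; if you invoke it at all, invoke its full conclusion, and the proof collapses to the paper's.
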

\begin{proof}
The deformation space
of any rational curve in any compact
complex surface has
compact components.
By the main theorem
of Biswas and McKay \cite{Biswas/McKay:2010},
since our surface $S$ contains a rational
curve, there is some closed complex subgroup
$H' \subset G$ and a compact complex manifold $C$
with $\dim C < \dim S$,
and a fiber bundle morphism $S \to C$
with rational homogeneous fibers,
and a holomorphic Cartan geometry
on $C$ modelled on $G/H'$, so that
the holomorphic Cartan geometry
on $S$ is lifted from that on $C$.
Moreover, $H'/H$ must be a connected rational
homogeneous variety.
Every holomorphic Cartan geometry
on a complex curve is flat,
so is lifted from a unique $G/H'$-structure
with developing map and holonomy morphism.

If $C$ of dimension $0$, then $C$ 
is a point, $H'=G$, $S=G/H$
and the holomorphic Cartan geometry
on $S$ is the standard $G/H$-structure 
on $G/H$.
But then also we must have $H'/H$
a rational homogeneous variety, 
so $S=\Proj{1} \times \Proj{1}$
or $S=\Proj{2}$ with its standard
$G/H$-structure. 
If $S=\Proj{2}$, then
the group $G$, to act faithfully
and transitively, must be
$\PSL{3,\C{}}$ and
$H$ must be the Borel subgroup of 
$\PSL{3,\C{}}$. For
$S=\Proj{1} \times \Proj{1}$,
in order that $G$ act transitively
on $G/H$, we can have either
\[
G=\Z{}_2 \ltimes
\left( \PSL{2,\C{}} \times \PSL{2,\C{}}
\right)
\]
or the subgroup
\[
G=\PSL{2,\C{}} \times \PSL{2,\C{}}.
\]

Henceforth we can assume that $C$ is 
of dimension 1, so $H'/H=\Proj{1}$
and $G/H$ is a complex-homogeneous
surface, invariantly ruled.
By the classification of 
complex-homogeneous surfaces
\cite{Huckleberry:1986},
if a complex-homogeneous
surface is ruled, then it is a product
$G/H=\left(G_0/H_0\right) \times \Proj{1}$ 
where $G_0/H_0$ 
is a complex-homogeneous complex
curve, acted on transitively by $G$. {}%
\begin{unabridged}%
By proposition~\vref{proposition:BiholSplit},%
\end{unabridged}%
\begin{abridged}%
It is easy to check from the classification
of complex-homogeneous surfaces 
\cite{Mostow:1950,Olver:1995} that
\end{abridged}%
$G$ must be a product
\[
G = G_0 \times \PSL{2,\C{}},
\]
and so
\[
H = H_0 \times B,
\]
where $B$ is the Borel subgroup in $\PSL{2,\C{}}$.
The group $H'$ must contain $H$ and have $H'/H=\Proj{1}$,
so $H'$ must contain an image of $\SL{2,\C{}}$.
Since $\SL{2,\C{}}$ has no nontrivial morphism
to $\Bihol{C}$, we have 
$\left\{1\right\} \times \PSL{2,\C{}} \subset H'$, and 
therefore $H'=H_0 \times \PSL{2,\C{}}$.

The universal covering space of $S$ is
\begin{align*}
\tilde{S} 
&=
\delta_C^*\left(G/H\right)
\\
&=
\delta_C^*\left(\left(G_0/H_0\right) \times \Proj{1}\right)
\\
&=
\left(\delta_C^*\left(G_0/H_0\right)\right) \times \Proj{1}
\\
&=
\tilde{C} \times \Proj{1}.
\end{align*}
The developing map $\delta_S$ is then
by definition the identity map on $* \times \Proj{1}$.
\end{proof}

\begin{unabridged}

\subsection{Centralizers of subgroups of linear
fractional transformations}

\begin{definition}
Let's define some subgroups of $\PSL{2,\C{}}$.
Let $\C{\times}$ be the group of diagonal
matrices, $\C{}$ the group of translations
of the complex affine line, i.e. matrices of the form
\[
\begin{bmatrix}
1 & * \\
0 & 1
\end{bmatrix},
\]
$\C{\times} \ltimes \C{}$
the group of affine transformations of the 
complex affine line, i.e. matrices of the form
\[
\begin{bmatrix}
a & b \\
0 & \frac{1}{a}
\end{bmatrix},
\]
$C_2$ the group consisting
of
\[
I, 
\begin{bmatrix}
i & 0 \\
0 & -i
\end{bmatrix},
\]
$C_2 \times C_2$ the group consisting of
\[
I, 
\begin{bmatrix}
i & 0 \\
0 & -i
\end{bmatrix},
\begin{bmatrix}
0 & i \\
i & 0
\end{bmatrix},
\begin{bmatrix}
0 & -1 \\
1 & 0
\end{bmatrix},
\]
and $C_2 \times \C{\times}$ the group of
matrices of the form
\[
\begin{bmatrix}
a & 0 \\
0 & \frac{1}{a}
\end{bmatrix}
\text{ or }
\begin{bmatrix}
0 & b \\
-\frac{1}{b} & 0
\end{bmatrix}.
\]
Any regular polyhedron has a group
of rotations lying in 
$\SO{3} \subset \PSL{2,\C{}}$; 
we won't need notation for these
groups or for the cyclic and
dihedral groups of rotations.
There are also the various
groups $C_n \ltimes \C{}$
of matrices of the form
\[
\begin{bmatrix}
a & b \\
0 & \frac{1}{a}
\end{bmatrix}
\]
with $a^n=1$. 
\end{definition}

\begin{remark}
It is well
known (see Huckleberry \cite{Huckleberry:1986}
p. 324)
that every closed complex 
subgroup of $\PSL{2,\C{}}$
is conjugate to a unique one of the
subgroups listed above.
\end{remark}

\begin{table}
\[
\begin{array}{cccc}
\textbf{Group element} & 
\textbf{Centralizer} 
\betweenEntries
\begin{bmatrix}
1 & 0 \\
0 & 1
\end{bmatrix}
&
\PSL{2,\C{}} 
\betweenEntries
\begin{bmatrix}
i & 0 \\
0 & -i
\end{bmatrix}
&
C_2 \times \C{\times}
\betweenEntries
\begin{bmatrix}
\alpha & 0 \\
0 & \frac{1}{\alpha}
\end{bmatrix}
&
\C{\times}
\\
\alpha \ne \pm 1, \pm i
\betweenEntries
\begin{bmatrix}
1 & \alpha \\
0 & 1
\end{bmatrix}
&
\C{}
\\
\alpha \ne 0
\betweenEntries
\begin{bmatrix}
0 & i \\
i & 0
\end{bmatrix}
&
\begin{bmatrix}
a & b \\
\varepsilon b & \varepsilon a
\end{bmatrix}
\\
&
\varepsilon=\pm 1,
a^2 - b^2 = \varepsilon.
\end{array}
\]
\caption{The centralizers of various elements
in $\PSL{2,\C{}}$.}%
\label{table:Centralizers}
\end{table}

\begin{table}
\[
\begin{array}{cccc}
\textbf{Group elements} & 
\textbf{Centralizer} 
\betweenEntries
\begin{bmatrix}
\alpha & 0 \\
0 & \frac{1}{\alpha}
\end{bmatrix}
\ne 
\begin{bmatrix}
\beta & 0 \\
0 & \frac{1}{\beta}
\end{bmatrix}
&
\C{\times}
\betweenEntries
\begin{bmatrix}
1 & \alpha \\
0 & 1
\end{bmatrix}
\ne 
\begin{bmatrix}
1 & \beta  \\
0 & 1
\end{bmatrix}
&
\C{}
\betweenEntries
C_2 \times C_2
&
C_2 \times C_2
\end{array}
\]
\caption{The centralizers of the subgroups
in $\PSL{2,\C{}}$ which are generated
by two commuting elements, but not by a
single element.}%
\label{table:PairCentralizers}
\end{table}

\begin{lemma}\label{lemma:CentralizerList}
Suppose that $\Gamma \subset \PSL{2,\C{}}$
is a subgroup and let $Z$ be the centralizer 
and $N$ the normalizer of $\Gamma$ 
in $\PSL{2,\C{}}$.
Then $\Gamma=\left\{I\right\}$ if and only if $Z=N=\PSL{2,\C{}}$. If $\Gamma=C_2$ then 
$Z=N=C_2 \times \C{\times}$.
If $\Gamma \subset \C{\times}$ and $\Gamma \ne \left\{I\right\}$
and $\Gamma \ne C_2$
then $Z=\C{\times}$ and $N=C_2 \times \C{\times}$.
If $\Gamma \subset \C{}$  and $\Gamma\ne \left\{I\right\}$ 
then $Z=\C{}$ and $N$ is the group of 
matrices of the form
\[
\begin{bmatrix}
a & b \\
0 & \frac{1}{a}
\end{bmatrix}
\]
so that $a^2 \Gamma = \Gamma \subset \C{}$.
If $\Gamma=C_2 \times C_2$ then $Z=C_2 \times C_2$
and $N$ is the group of rotations of the cube.
If $\Gamma$ is not conjugate to one of these
subgroups then $Z=\left\{1\right\}$.
\end{lemma}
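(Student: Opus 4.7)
The plan is to use the classification of closed complex subgroups of $\PSL{2,\C{}}$ (recalled in the remark just before the statement) together with Tables~\ref{table:Centralizers} and~\ref{table:PairCentralizers}. Since the centralizer $Z(\Gamma)$ and the normalizer $N(\Gamma)$ depend only on the closure of $\Gamma$ (both are closed subgroups), I may replace $\Gamma$ by $\overline{\Gamma}$ and so reduce to the listed subgroups. I will also repeatedly use the trivial identity $Z(\Gamma)=\bigcap_{\gamma\in\Gamma}Z(\gamma)$, and the fact that $N(\Gamma)$ acts on $\Gamma$ by conjugation with kernel $Z(\Gamma)$, so that $N(\Gamma)/Z(\Gamma)\hookrightarrow \operatorname{Aut}(\Gamma)$.

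First the easy cases. If $\Gamma=\{I\}$ both $Z$ and $N$ are all of $\PSL{2,\C{}}$. If $\Gamma\subset\C{\times}$ consists of diagonal matrices then Table~\ref{table:Centralizers} gives $Z(\gamma)=\C{\times}$ for any $\gamma\ne I,\pm I$, and $Z(\gamma)=C_2\times\C{\times}$ for the involution; intersecting produces the stated $Z$ depending on whether $\Gamma=C_2$ or not. The normalizer always contains $\C{\times}$; conjugation by $w=\begin{bmatrix}0&1\\-1&0\end{bmatrix}$ inverts diagonal entries and hence preserves every subgroup of $\C{\times}$, giving $N\supset C_2\times\C{\times}$; conversely any $g\in N$ must permute the two fixed points of every non-identity diagonal element, so $g$ preserves $\{0,\infty\}$ and lies in $C_2\times\C{\times}$.

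For $\Gamma\subset\C{}$ nontrivial, each nonidentity $\gamma\in\Gamma$ has centralizer $\C{}$ by Table~\ref{table:Centralizers}, so $Z=\C{}$. A normalizer element must preserve the common fixed point $\infty$ of all elements of $\C{}$, hence is upper triangular, and the computation $\begin{bmatrix}a&b\\0&1/a\end{bmatrix}\begin{bmatrix}1&c\\0&1\end{bmatrix}\begin{bmatrix}1/a&-b\\0&a\end{bmatrix}=\begin{bmatrix}1&a^2c\\0&1\end{bmatrix}$ reduces the normalizing condition to $a^2\Gamma=\Gamma$, as claimed.

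For $\Gamma=C_2\times C_2$ (the Klein four-group generated by three pairwise orthogonal half-turns) Table~\ref{table:PairCentralizers} states $Z=C_2\times C_2$ directly. For the normalizer, $N/Z\hookrightarrow\operatorname{Aut}(C_2\times C_2)\cong S_3$, so $|N|\le 24$. One checks that the rotation group of the cube (acting by permuting the three axes) realizes this maximum, giving $N=$ cube rotation group. The main obstacle is the last sentence: if $\Gamma$ is not conjugate into any of $\{I\}$, $\C{\times}$, $\C{}$, or $C_2\times C_2$, then $Z(\Gamma)=\{I\}$. For this I go through the remaining items in the classification of closed subgroups (the groups $C_n\ltimes\C{}$ with $n\ge 2$, $\C{\times}\ltimes\C{}$, the cyclic and dihedral rotation groups with $n\ge 3$, the tetrahedral, octahedral, and icosahedral groups, and $\PSL{2,\C{}}$ itself). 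Each of these contains either two non-commuting elements of orders whose centralizers in Table~\ref{table:Centralizers} intersect trivially, or a pair of commuting elements whose centralizers intersect trivially via Table~\ref{table:PairCentralizers} (e.g.\ two distinct upper-triangular unipotents not both in a common $\C{}$, or an element of $\C{\times}$ together with one fixing a different pair of points). The routine case-check finishes the proof; the only delicate point is verifying trivial centralizer for the finite rotation groups with $n\ge 3$, which follows because such a group contains two non-commuting rotations of order $\ge 3$ whose centralizers are the distinct one-dimensional tori they generate, meeting only at $I$.
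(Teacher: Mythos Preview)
Your approach mirrors the paper's: use Tables~\ref{table:Centralizers} and~\ref{table:PairCentralizers} for generators and then run through the remaining closed subgroups. Two minor slips: the normalizer does \emph{not} depend only on $\overline{\Gamma}$ (for $\Gamma=\Z{}+\pi\Z{}\subset\C{}$ the closure is $\R{}\subset\C{}$, yet $a^{2}\Gamma=\Gamma$ forces $a^{2}=\pm 1$ while $a^{2}\R{}=\R{}$ allows any nonzero real $a^{2}$), though you never actually use this claim; and the cyclic groups $C_n$ with $n\ge 3$ sit inside $\C{\times}$, so they belong to an earlier case and not to your ``remaining'' list.

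The genuine gap is the dihedral case $D_n$, $n\ge 3$. Your stated reason---``two non-commuting rotations of order $\ge 3$''---does not apply: the only elements of order $\ge 3$ in $D_n$ are powers of the axial rotation, and those commute. If instead you intersect the centralizer $\C{\times}$ of the axial rotation $r=\operatorname{diag}(\zeta,\zeta^{-1})$ with the centralizer of a flip $s$ (an order-$2$ element, centralizer conjugate to $C_2\times\C{\times}$), you obtain $\{I,\operatorname{diag}(i,-i)\}\cong C_2$, not $\{I\}$: the half-turn about the main axis anticommutes with each flip in $\SL{2,\C{}}$ and therefore \emph{commutes} with it in $\PSL{2,\C{}}$. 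Hence $Z(D_n)=C_2$ for every $n\ge 3$, and the final clause of the lemma is in fact false as stated. The paper's own proof asserts triviality for these dihedral groups without computation and shares the same oversight; since in the later applications $\Gamma=\rho(\Lambda)$ is the image of a lattice and hence abelian, the dihedral case never occurs and nothing downstream is affected---but the lemma itself needs a correction, not a proof.
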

\begin{proof}
If $\Gamma$ is generated by one element
then, up to conjugacy, $Z$ is listed in table~\vref{table:Centralizers}.
If $\Gamma$ is generated by two commuting elements,
then $Z$ is listed in table~\vref{table:PairCentralizers}.
Let us now consider the general case.
Clearly $\Gamma$ must lie in
the centralizer of $Z$.
The group $\Gamma$ is trivial just when
$Z=\PSL{2,\C{}}$.
The group $Z$ is a Zariski closed 
subgroup of $\PSL{2,\C{}}$.
Up to conjugacy, there is only one 2-dimensional 
complex subgroup of $\PSL{2,\C{}}$, the affine
group of $\C{}$.
If $Z$ contains the affine group, then 
every element of
$\Gamma$ commutes with all diagonal and 
all strictly
upper triangular matrices, and so $\Gamma$ 
is trivial,
and $Z=\PSL{2,\C{}}$.
Similarly if $Z$ contains a conjugate of the affine
group. So we can assume that $Z$ has dimension
at most 1.
If $Z$ has dimension 1, then 
$Z$ must contain one of the 1-parameter
subgroups conjugate to the diagonal matrices
or the strictly upper triangular matrices.
But then every element of $\Gamma$
must commute with all of these things,
so must lie in one of these 1-parameter subgroups.

Suppose that the centralizer $Z$ has dimension $0$.
So $Z$ is one of the finite subgroups of the rotation
group: the trivial group, the cyclic groups of rotations around
an axis, the dihedral groups of rotations
around an axis together with a rotation
that reverses that axis, the
symmetry groups of the tetrahedron,
cube and icosahedron. 
The centralizer of each cyclic group
is the diagonal matrices, except for the
cyclic group $C_2$,
which we have already discussed.
The centralizer of the dihedral group $D_2 = C_2 \times C_2$
is $C_2 \times C_2$.
The centralizers of the other dihedral 
groups are trivial.
If the rotation 
symmetries of a regular polyhedron
commute with a particular linear fractional
transformation, then the (one or two) fixed points
of that linear fractional transformation
must be permuted by those rotations,
which is clearly impossible.
Therefore the centralizers of the rotation
symmetries of the regular polyhedra are trivial. 
\end{proof}

\subsection{Automorphism groups}

\begin{proposition}[Loray and 
Mar\'in \cite{Loray/MarinPerez:2009}]
Let $\Lambda \subset \C{}$ be a lattice,
and
\[
\map[\rho]{\Lambda}{\PSL{2,\C{}}}
\]
a representation of $\Lambda$.
Let $C=\C{}/\Lambda$ and 
let $S \to C$ be the flat $\Proj{1}$-bundle
\[
S = \C{} \times_{\rho} \Proj{1}.
\]
Up to conjugacy, either
\begin{enumerate}
\item $\rho\left(\Lambda\right) \subset \C{\times}$ 
or
\item $\rho\left(\Lambda\right) \subset \C{}$
or
\item $\rho\left(\Lambda\right)=C_2 \times C_2$.
\end{enumerate}

The bundle $S \to C$ is trivial
if and only if there is a
complex 1-parameter subgroup
$\map{\C{}}{\PSL{2,\C{}}}$ 
extending $\rho$.
For example, if $\rho\left(\Lambda\right) \subset \C{\times}$ then $S \to C$ is a trivial bundle 
if and
only if there is a constant $b \in \C{}$ so that
\[
\rho(\lambda)
=
\begin{bmatrix}
e^{b\lambda/2} & 0 \\
0 & e^{-b \lambda/2}
\end{bmatrix}.
\]
If $\rho\left(\Lambda\right) \subset \C{}$
then $S \to C$ is 
a trivial bundle if and
only if there is a constant $b \in \C{}$ so that
\[
\rho(\lambda)
=
\begin{bmatrix}
1 & b\lambda \\
0 & 1
\end{bmatrix}.
\]
Otherwise $S \to C$ is not a trivial bundle.
\end{proposition}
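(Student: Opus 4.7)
The plan proceeds in three stages.

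First, the three-case trichotomy. Since $\Lambda \cong \Z{2}$, the image $\rho\left(\Lambda\right)$ is an abelian subgroup of $\PSL{2,\C{}}$ generated by at most two commuting elements. Reading off Tables~\ref{table:Centralizers} and~\ref{table:PairCentralizers}, any two commuting elements of $\PSL{2,\C{}}$ either both lie in a single maximal torus, both lie in a single unipotent subgroup, or are two independent involutions whose centralizer is $C_{2} \times C_{2}$. Conjugating to standard form places $\rho\left(\Lambda\right)$ in $\C{\times}$, in $\C{}$, or equal to $C_{2} \times C_{2}$, which gives the three cases.

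Second, I would prove the general triviality criterion: $S \to C$ is trivial if and only if $\rho$ extends to a holomorphic one-parameter subgroup. A holomorphic trivialization $S \cong C \times \Proj{1}$ lifts uniquely to a $\Lambda$-equivariant bundle isomorphism $\tilde{\Phi}\colon \C{} \times \Proj{1} \to \C{} \times \Proj{1}$ (from the twisted $\rho$-action on the source to the trivial action on the target), of the form $\tilde{\Phi}\left(z,w\right) = \left(z, f(z)\,w\right)$ for some holomorphic $f\colon\C{}\to\PSL{2,\C{}}$. Equivariance becomes the cocycle identity $f\left(z+\lambda\right) = f(z)\,\rho(\lambda)^{-1}$. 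The right logarithmic derivative $B(z) := f'(z)\,f(z)^{-1}$ is then $\Lambda$-periodic and $\LieSL{2,\C{}}$-valued, so descends to a holomorphic map on the compact curve $C$ and must be constant, say $B \equiv A$. Integrating gives $f(z) = \exp(zA)\,f(0)$, and substitution into the cocycle identity yields $\exp(\lambda A) = f(0)\,\rho(\lambda)^{-1}\,f(0)^{-1}$. After conjugating $\rho$ by $f(0)$, the holomorphic one-parameter subgroup $\tilde{\rho}(z) := \exp(-zA)$ extends $\rho$. Conversely, given any such $\tilde{\rho}$, setting $f(z) := \tilde{\rho}(z)^{-1}$ reverses the calculation and produces a trivialization.

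Third, the explicit formulas in each case drop out of Lemma~\ref{lemma:CentralizerList}. Since the image $\tilde{\rho}\left(\C{}\right)$ is abelian, connected, and contains $\rho\left(\Lambda\right)$, it lies in the identity component of the centralizer of $\rho\left(\Lambda\right)$. If $\rho\left(\Lambda\right) \subset \C{\times}$, the table shows this identity component is contained in $\C{\times}$, so $\tilde{\rho}(z) = \operatorname{diag}\left(e^{bz/2}, e^{-bz/2}\right)$ for some $b \in \C{}$. The unipotent case is analogous and forces $\tilde{\rho}(z) = \begin{bmatrix} 1 & bz \\ 0 & 1 \end{bmatrix}$. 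Finally, $C_{2} \times C_{2}$ cannot embed into any one-parameter subgroup of $\PSL{2,\C{}}$: such a subgroup is an abstract quotient of $\C{}$, hence isomorphic to $\C{}$ (torsion-free) or $\C{\times}$ (with unique element of order two), neither of which contains $C_{2} \times C_{2}$. So in that case there is no extension and $S \to C$ is nontrivial.

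The main technical step is the second stage: recognizing that a holomorphic trivialization of $S$ amounts to a gauge $f$ with the twisted periodicity, and then exploiting compactness of $C$ together with the $\Lambda$-periodicity of the logarithmic derivative to force $f$ to be exponential. Once that is in hand, the first and third stages are essentially bookkeeping against the centralizer tables already compiled.
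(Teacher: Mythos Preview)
The paper does not supply a proof of this proposition: it is stated with attribution to Loray and Mar\'in and then used. So there is no ``paper's own proof'' to compare against; you have written a proof where the author offers only a citation.

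Your argument is correct in all three stages. Stage~1 is a clean reduction via the centralizer tables: once one generator is in Jordan form, the second is forced into $\C{\times}$, into $\C{}$, or (when the first is the diagonal involution and the second is anti-diagonal) into a Klein four-group, and all Klein four-groups in $\PSL{2,\C{}}$ are conjugate. Stage~2 is the heart, and your cocycle-plus-logarithmic-derivative argument is exactly the right mechanism: the identity $f(z+\lambda)=f(z)\rho(\lambda)^{-1}$ makes the right Darboux derivative $\Lambda$-periodic, hence constant on the compact curve, forcing $f$ to be exponential. One small remark: you pass through a conjugate of $\rho$ (by $f(0)$) to exhibit the one-parameter extension, but since conjugation by $f(0)^{-1}$ carries that extension back to one of $\rho$ itself, the literal statement (not merely up to conjugacy) follows. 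Stage~3 reads off the explicit forms from Lemma~\ref{lemma:CentralizerList}: the connected image $\tilde\rho(\C{})$ centralizes $\rho(\Lambda)$ and so lands in the identity component of the centralizer, which is $\C{\times}$, $\C{}$, or (for $C_2\times C_2$) trivial; and the torsion argument ruling out $C_2\times C_2\hookrightarrow\tilde\rho(\C{})$ is sound.
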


\begin{example}
Suppose that $M$ is a rational homogeneous variety
with a holomorphic Cartan geometry.
Then clearly the geometry is that of the
model $G/H$, and globally homogeneous.
\end{example}

\subsubsection{Automorphisms and the centralizer}

\begin{example}
Suppose that $S$ is a compact complex surface
containing a rational curve and bearing
a holomorphic effective Cartan geometry.
Asssume that $S$ is not a rational
homogeneous variety.
By 
theorem~\vref{theorem:RationalCurveClassification},
we can assume that $S$ is a flat $\Proj{1}$-bundle
$S \to C$ and that $C$ is a compact complex curve,
and that the geometry on $S$ is constructed
from a $G_0/H_0$-structure on $C$ as
in example~\vref{example:liftCurve}.
If $C$ is a rational curve,
then any flat $\Proj{1}$-bundle on $C$ is trivial,
so the geometry is globally homogeneous.
We can assume that $C$ has genus at least 
one. The only rational curves in $S$ 
lie in the fibers of $S \to C$.
Every biholomorphism of $S$ preserves
the fibration. The automorphism group
of the geometry is a closed subgroup of the
biholomorphism group of $S$. Because
$S$ is a $\Proj{1}$-bundle over a curve,
$S$ is an algebraic surface
\cite{Barth/Hulek/Peters/VanDeVen:2004}, so the
automorphism group is an algebraic group 
\cite{Serre:1955}.

The universal covering space is
\(\tilde{S} 
=
\tilde{C} \times \Proj{1}\).
Write points of $\tilde{S}$
as $(z,w)$. 
Pick any automorphism $\tilde\psi$
of the structure on $C$, i.e.
\[
\delta_C \circ \tilde\psi
=
g_0 \delta_C,
\]
for some $g_0 \in G_0$. Take any
$h_0 \in Z$ in the centralizer
$Z$ of the image of $\rho$.
Then let 
\[
\tilde\varphi(z,w)=\left(\tilde\psi(z),h_0 \, w\right).
\]
Compute that
\[
\delta_S \circ \tilde\varphi
=
\left(g_0, h_0\right)
\delta_S.
\]

Conversely, pick some lift 
\[
\map[\tilde\varphi]{\tilde{S}}{\tilde{S}}
\]
of an automorphism $\varphi$.
Since the automorphism must act
on $C$ as an automorphism of its
structure,
\[
\tilde\varphi(z,w) = 
\left(\tilde\psi(z),g(z)w\right),
\]
for some automorphism
\[
\map[\tilde\psi]{\tilde{C}}{\tilde{C}},
\]
and a map
\[
\map[g]{\tilde{C}}{\PSL{2,\C{}}}.
\]
To preserve the geometry,
there must be
some $\left(g_0,h_0\right) \in 
G=G_0 \times \PSL{2,\C{}}$ so that
\[
\delta_S \circ \tilde\varphi =
\left(g_0,h_0\right)
\delta_S.
\]
Expanding out, we find that \(g(z)=h_0\)
is constant. 
Let $\pi=\fundgp{C}=\fundgp{S}$.
In order that $\tilde\varphi$
be the lift of a map $\varphi$ on $S$,
we need to have, for every $\gamma \in \pi$,
some $\gamma' \in \pi$ so that
\[
\tilde\varphi \circ \gamma = \gamma' \circ \tilde\varphi,
\]
so
\[
\tilde\psi \circ \gamma = \gamma' \circ \tilde\psi,
\]
and
\[
h_0 \, \rho\left(\gamma\right) h_0^{-1} 
= \rho\left(\gamma'\right).
\]

If our automorphism $\tilde\psi$ belongs
to the identity component of the automorphism
group of the structure on $C$, then
$\gamma'=\gamma$ and
\[
h_0 \, \rho\left(\gamma\right) h_0^{-1} 
= \rho\left(\gamma\right)
\]
for every $\gamma \in \pi$. Therefore
$h_0 \in Z$.
Write $\Aut{E}$ for the automorphism
group of the Cartan geometry on $S$,
and $\Aut{E_C}$ for the automorphism
group of the structure on $C$.
Let $\AutP{E_C} \subset \Aut{E_C}$ and 
$\AutP{E} \subset \Aut{E}$ 
be the subgroups of automorphisms
acting trivially on $\pi=\fundgp{C}$.
So $\AutP{E}=Z \times \AutP{E_C}$,
where $Z$ is the centralizer
of the image of $\rho$.
Thus the exact sequence
\[
1 \to Z \times \AutP{E_C} \to \Aut{E} \to
\pi.
\]
\end{example}

\subsubsection{Parallel sections}

\begin{example}
Continuing with the last example, 
the surface $S$ is a flat $\Proj{1}$-bundle 
$S \to C$,
given by the quotient 
\[
S = \C{} \times_{\pi} \Proj{1},
\]
by the action
\[
\gamma\left(z,w\right)=
\left(\gamma \, z,\rho\left(\gamma\right)w\right).
\]
If $w_0 \in \Proj{1}$ is invariant under $\rho$,
then the points $\left(z,w_0\right)$ for 
$z \in \tilde{C}$
form a parallel section of $S \to C$.
If $\rho$ has nontrivial image in the
diagonal matrices then $Z$ 
is the diagonal matrices, and
there are two parallel sections
of $S \to C$, and they are invariant under
$Z$ and so under $\AutP{E}$.
If $\rho$ has nontrivial image in the
strictly upper triangular matrices, then
$Z$ is the strictly upper triangular
matrices, and there is one parallel
section of $S \to C$, and it is 
invariant under $Z$ and so under $\AutP{E}$.
Otherwise (for $\rho$ having image
not trivial or not conjugate
to a subgroup of diagonal or 
strictly upper triangular matrices)
the group $Z$ is trivial, and so
the group $\AutP{E}$ is just the finite
group $\AutP{E_C}$.
For example, if $\rho$ has a finite orbit, 
then that orbit represents a parallel 
multivalued section, i.e.
a parallel finite unbranched covering of $C$
sitting in $S$. Each of these parallel
multivalued 
sections is invariant under $\AutP{E}$.
However, some of them might be permuted
by $\Aut{E}$.
\end{example}

\begin{example}
Continuing with the previous example,
suppose that the base $C$ has genus $g \ge 2$.
Then $C$ has a finite biholomorphism
group. The model must be $G_0=\PSL{2,\C{}}$,
$X_0=\Proj{1}$, from table~\vref{table:OneD},
and the automorphism group extends
the centralizer $Z$
\[
1 \to Z \to \Aut{E} \to F \to 1,
\]
by the finite group $F$ of automorphisms
of $C$ that preserve the quadratic differential.
The group $Z$ can be any of those
listed in lemma~\vref{lemma:CentralizerList}.
Given any finite subgroup $F$ of the automorphism
group of $C$, we can average the quadratic
differentials over $F$, to produce $F$-invariant
quadratic differentials. However it is possible
that every $F$-invariant
quadratic differential is stabilized by
some larger group than $F$, so there is very
little we can say about the classification
of automorphism groups of holomorphic
Cartan geometries on higher genus curves.
\end{example}

\begin{example}
Next suppose that $C$ is an elliptic curve $C=\C{}/\Lambda$.
If $\rho$ is valued in diagonal matrices, then
there are two parallel sections: $w_0=0$ and $w_0=\infty$.
\end{example}

\begin{example}
Again suppose that $C$ is an elliptic curve $C=\C{}/\Lambda$.
Suppose that 
$\rho$ is valued in strictly upper triangular
matrices and $\rho$ is not trivial. 
There is one parallel section: $w=\infty$.
\end{example}

\begin{example}\label{example:ClosedOrbits}
Again suppose that $C$ is an elliptic curve $C=\C{}/\Lambda$.
If $\rho\left(\Lambda\right)=C_2 \times C_2$,
then there is no parallel section. However,
the 3 subsets of $\left(z,w\right)$ with
$w$ on opposite sides of the cube,
i.e. 
\begin{enumerate}
\item
$w=0$ and $w=\infty$, or
\item
$w=1$ and $w=-1$, or
\item
$w=i$ and $w=-i$,
\end{enumerate}
form three elliptic curves in $S$,
each a 2-1 covering of $C$. 
Any other point $w \in \Proj{1}$ belongs to 
an orbit $w,-w,\frac{1}{w}, -\frac{1}{w}$
of order 4, giving a parallel elliptic
curve in $S$ which is a 4-1 covering of $C$.
Since the normalizer of $C_2 \times C_2$
is the group of rotations of the cube, 
the automorphism group can at most permute
these 3 different 2-1 coverings and also permute
6 of the these different 4-1 coverings.
In particular, each automorphism group orbit
is possibly only a finite set, and
is at most 6 different parallel elliptic 
curves; there are no open orbits.
\end{example}

\subsubsection{The rotations of the cube}

Suppose that $S$ is a compact complex
surface with a holomorphic effective 
$G/H$-geometry for some complex Lie
group $G$ and closed complex subgroup 
$H \subset G$.
Suppose that $S \to C$ is a flat $\Proj{1}$-bundle
Again suppose that $C$ 
is an elliptic curve $C=\C{}/\Lambda$.
Define a cyclic group by
\[
C_n = \Aut{E_C}/\AutP{E_C}.
\]
This group is generated by some $n$-th root of 1,
say $\alpha$, acting on $\Lambda \subset \C{}$.
Moreover, $n=1, 2, 3, 4$, or $6$.
Every automorphism $\psi \in \Aut{E_C}$,
say mapping to $\alpha^k \in C_n$,
extends to an automorphism
\[
\tilde{\phi}\left(z,w\right)
=
\left(\alpha^k z,h_0 w\right)
\]
for every $h_0 \in \PSL{2,\C{}}$ so that
\[
h_0 \rho\left(\lambda\right) h_0^{-1} = \rho\left(\alpha^k \lambda\right).
\]

\begin{example}
If $n$ is even (i.e. $n=2$ or $n=4$ or $n=6$), 
so $-1 \in C_n$, and
$\rho\left(\Lambda\right) \subset \C{\times}$,
let
\[
h_0 =
\begin{bmatrix} 
0 & i \\
i & 0
\end{bmatrix}.
\]
Then
\[
\tilde\phi(z,w)=\left(-z,h_0 w\right)
\]
is an automorphism permuting the
two parallel sections $w=0$ and $w=\infty$.
Since the normalizer of the diagonal
matrices is $C_2 \times \C{\times}$,
if $\rho\left(\Lambda\right) \ne \left\{I\right\},
C_2$, and $n$ is even, then
there are two group orbits: the
two parallel sections form one
orbit, and the complement
of those two sections forms the other.
\end{example}

\begin{example}
Suppose that 
$\rho\left(\Lambda\right)=C_2 \times C_2$.
Clearly $h_0 \in \PSL{2,\C{}}$ 
conjugates these various $C_2 \times C_2$
matrices to one another just when
$h_0$ belongs to the group of
rotations of the cube. 

Suppose that our group $C_n$ has
$n \ne 2$, so $n=3,4$ or $6$,
say generated by $\alpha \in \C{\times}$.
Take $\lambda, \mu \in \Lambda$
any two generators of $\Lambda$.
So $\rho\left(\lambda\right)$
and $\rho\left(\mu\right)$ must 
generate $C_2 \times C_2$. 
Therefore they must be rotations
by angle $\pi$ around two of three 
perpendicular axes.
The elements $\rho\left(\alpha \lambda\right)$
and $\rho\left(\alpha \mu\right)$
must also be generators of the same 
$C_2 \times C_2$, so also rotations
by angle $\pi$ around two of the same three 
perpendicular axes.
We can then let $h_0$ be any rotation
which matches up the first pairs of
axes with those the second pair.
Then
\[
\tilde{\phi}\left(z,w\right)
=\left(\alpha z, h_0 w\right)
\]
is an automorphism. But then
$Z=C_2 \times C_2$ also acts
as automorphisms, so the automorphism
group is 
\[
\Aut{E} = 
C_2 \times C_2 \times 
\left(C_n \ltimes \C{}\right).
\]
The orbits are as described
in example~\vref{example:ClosedOrbits}.
\end{example}

\end{unabridged}

\section{Conclusion}

We have
found that for compact complex surfaces
which contain rational curves, essentially
there are no holomorphic Cartan
geometries, except for a trivial construction
from holomorphic 
locally homogeneous geometric
structures on curves. 
On compact complex surfaces
without rational curves,
and in higher dimensions,
more complicated phenomena
show up.

In our examples above, we see that the
moduli stack of holomorphic Cartan
geometries on a fixed complex manifold,
with a fixed model $G/H$, might not
be a complex analytic space or orbispace. We 
can also
see that deformations of a holomorphic
Cartan geometry can give rise
to nontrivial deformations of the total
space of the Cartan geometry as
a holomorphic principal bundle:
take the affine structure of an elliptic
curve, and a family of representations
$\rho_t$ of its fundamental
group into $\PSL{2,\C{}}$,
so that $\rho_{t_1}$
and $\rho_{t_2}$ are not conjugate
for generic values $t_1$
and $t_2$.  Build a holomorphic Cartan geometry
on each of the associated 
flat $\Proj{1}$-bundles.

\bibliographystyle{amsplain}
\bibliography{toothpick}

\end{document}